\documentclass[twoside,leqno,10pt, A4]{amsart}
\usepackage{amsfonts}
\usepackage{amsmath}
\usepackage{amscd}
\usepackage{amssymb}
\usepackage{amsthm}
\usepackage{amsrefs}
\usepackage{latexsym}
\usepackage{mathrsfs}
\usepackage{bbm}
\usepackage{enumerate}
\usepackage{color}
\begin{document}

\newtheorem{theorem}[subsection]{Theorem}
\newtheorem{proposition}[subsection]{Proposition}
\newtheorem{lemma}[subsection]{Lemma}
\newtheorem{corollary}[subsection]{Corollary}
\newtheorem{conjecture}[subsection]{Conjecture}
\newtheorem{prop}[subsection]{Proposition}
\numberwithin{equation}{section}
\newcommand{\mr}{\ensuremath{\mathbb R}}
\newcommand{\mc}{\ensuremath{\mathbb C}}
\newcommand{\dif}{\mathrm{d}}
\newcommand{\intz}{\mathbb{Z}}
\newcommand{\ratq}{\mathbb{Q}}
\newcommand{\natn}{\mathbb{N}}
\newcommand{\comc}{\mathbb{C}}
\newcommand{\rear}{\mathbb{R}}
\newcommand{\prip}{\mathbb{P}}
\newcommand{\uph}{\mathbb{H}}
\newcommand{\fief}{\mathbb{F}}
\newcommand{\majorarc}{\mathfrak{M}}
\newcommand{\minorarc}{\mathfrak{m}}
\newcommand{\sings}{\mathfrak{S}}
\newcommand{\fA}{\ensuremath{\mathfrak A}}
\newcommand{\mn}{\ensuremath{\mathbb N}}
\newcommand{\mq}{\ensuremath{\mathbb Q}}
\newcommand{\half}{\tfrac{1}{2}}
\newcommand{\f}{f\times \chi}
\newcommand{\summ}{\mathop{{\sum}^{\star}}}
\newcommand{\chiq}{\chi \bmod q}
\newcommand{\chidb}{\chi \bmod db}
\newcommand{\chid}{\chi \bmod d}
\newcommand{\sym}{\text{sym}^2}
\newcommand{\hhalf}{\tfrac{1}{2}}
\newcommand{\sumstar}{\sideset{}{^*}\sum}
\newcommand{\sumprime}{\sideset{}{'}\sum}
\newcommand{\sumprimeprime}{\sideset{}{''}\sum}
\newcommand{\shortmod}{\ensuremath{\negthickspace \negthickspace \negthickspace \pmod}}
\newcommand{\V}{V\left(\frac{nm}{q^2}\right)}
\newcommand{\sumi}{\mathop{{\sum}^{\dagger}}}
\newcommand{\mz}{\ensuremath{\mathbb Z}}
\newcommand{\leg}[2]{\left(\frac{#1}{#2}\right)}
\newcommand{\muK}{\mu_{\omega}}
\newcommand{\seteq}{:=}
\newcommand{\odd}{\mathrm{\ primary}}
\newcommand{\res}{\mathrm{Res}}

\newcommand{\hf}{\tfrac{1}{2}}
\newcommand{\af}{\mathfrak{a}}
\newcommand{\Wf}{\mathcal{W}}

\title[Large values of cubic and quartic Dirichlet $L$-functions of prime moduli]{Large values of cubic and quartic Dirichlet $L$-functions of prime moduli}

\author[P. Gao]{Peng Gao}
\address{School of Mathematical Sciences, Beihang University, Beijing 100191, China}
\email{penggao@buaa.edu.cn}

\author[L. Zhao]{Liangyi Zhao}
\address{School of Mathematics and Statistics, University of New South Wales, Sydney NSW 2052, Australia}
\email{l.zhao@unsw.edu.au}

\begin{abstract}
In this paper, we apply the resonator method to exhibit large values at the central point of the families of cubic and quartic Dirichlet $L$-functions of prime moduli under the generalized Riemann hypothesis. 
\end{abstract}

\maketitle

\noindent {\bf Mathematics Subject Classification (2010)}: 11M06, 11N56 \newline

\noindent {\bf Keywords}: Dirichlet $L$-functions, large values, resonance method, cubic and quartic character sums

\section{Introduction}

Extreme values of $L$-functions have attracted much attention in the literature. In \cite{Sound08}, K. Soundararajan introduced the resonance method to study large values of $L$-functions. This powerful method allowed him to show that for sufficiently large $X$, 
\begin{align*}
	\max_{\substack{d \text{ fundamental discrimint }\\X<|d|\le 2X}}|L(\tfrac 12, \chi_{d})|\geq \exp\left(\left(\frac 1{\sqrt{5}}  +o(1) \right)\sqrt{\frac{\log X }{\log_2 X}}\right), 
\end{align*}
  where $\chi_d=\leg {d}{\cdot}$ denotes the Kronecker symbol. Here and throughout the paper, we denote $\log_j$ the $j$-fold iterated logarithm. \newline
  
The method of Soundararjan was subsequently utilized by C. Aistleitner \cite{Aistleitner16}, A. Bondarenko and K. Seip in \cite{BS18}, and by R. de la Bret\`eche and G. Tenenbaum in \cite{BT19} to study large values of the Riemann zeta function $\zeta(s)$.  Incorporating the long resonator method of Bondarenko--Seip in \cite{BS18}, P.  Darbar and G. Maiti  showed in \cite{DM25} that under the generalized Riemann hypothesis (GRH), for sufficiently large $X$, 
\begin{align*}
	\max_{\substack{d \text{ fundamental discrimint }\\X<|d|\le 2X}}|L(\tfrac 12, \chi_{d})|\geq \exp\left(\left(\frac 1 2 +o(1) \right)\sqrt{\frac{\log X \log_3 X}{\log_2 X}}\right).
\end{align*}

 Throughout the paper, we reserve the letter $p$ for a positive rational prime in $\mz$. It is shown in \cite{FHX26} by M. Fan, S. Hua and S. Xie using the resonance method that for sufficiently large $X$, 
\begin{align}
\label{p}
\max_{\substack{X< p \leq 2X \\ p \equiv 1 \shortmod 8}}|L(\tfrac 12, \chi_p)|\geq \exp\left(\left(\sqrt{\frac 8{45}}+o(1)\right)\sqrt{\frac{\log X }{\log_2 X}}\right).
\end{align}
Based on the ideas in \cite{DM25}, the first-named author proved in \cite{Gao2026-6} that under GRH,  for sufficiently large $X$,
\begin{align*}
\begin{split}
  \max_{\substack{X< p \le 2X}}\Big|L(\tfrac 12, \chi_{8p})\Big |\geq \exp\left(\left(\frac 1 2 +o(1) \right)\sqrt{\frac{\log X \log_3 X}{\log_2 X}}\right).
\end{split}
\end{align*}
As noted in \cite{DM25}, one may adapt the approaches therein to show that the above inequality remains valid with $L(\frac 12, \chi_{8p})$ replaced by $L(\frac 12, \chi_{p})$ for a fundamental discriminant $p$, leading to an improvement of \eqref{p} under GRH. \newline
   
For any $p$, let $\chi_0$ denote the principle Dirichlet character modulo $p$ (we omit writing out its dependence on $p$ as it will be clear from the context). We say a Dirichelt character $\chi$ modulo $p$ is a cubic (resp. quartic) character if $\chi^3 = \chi_0$ (resp. $\chi^4 = \chi_0$).  It is the aim of this paper to extend the methods of Darbar and Maiti and study large values at the central point of the families  of cubic and quartic Dirichlet $L$-functions of prime moduli under GRH. \newline
 
    Our main result is as follows.  
\begin{theorem}
\label{main theorem 1}
 With the notation as above and the truth of GRH, we have, for sufficiently large $X$, 
\begin{equation} \label{cubicbound}
  \max_{\substack{X< p \le 2X \\\chi \bmod{p} \\ \chi^3 = \chi_0 }}\Big|L(\tfrac 12, \chi)\Big |  \geq \exp\left(\left(\frac {\sqrt{2}}{\pi} +o(1) \right)\frac {\log_{3} X}{\log_{2} X}\sqrt{\log X}\right),  \\
\end{equation}
and
\begin{equation} \label{quarticbound}
 \max_{\substack{X< p \le 2X \\\chi \bmod{p} \\ \chi^4 = \chi_0 }}\Big|L(\tfrac 12, \chi)\Big |   \geq \exp\left(\left(\frac {\sqrt{3/2}}{\pi} +o(1) \right)\frac {\log_{3} X}{\log_{2} X}\sqrt{\log X}\right).
\end{equation}
\end{theorem}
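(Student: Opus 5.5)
Our plan is to adapt the long resonator method of Bondarenko--Seip in the form used by Darbar--Maiti \cite{DM25}, with the family of quadratic characters replaced by cubic (resp. quartic) characters of prime modulus. Cubic characters modulo $p$ exist only when $p\equiv 1 \pmod 3$. We parametrize them through primary primes $\pi \in \mz[\omega]$, $\omega=e^{2\pi i/3}$, with $N(\pi)=p$, via the cubic residue symbol $\chi_\pi=(\tfrac{\cdot}{\pi})_3$; for the quartic case we use primary primes of $\mz[i]$ and the quartic residue symbol. We then choose a resonator
\[
R(\chi)=\sum_{n\le N} r(n)\chi(n), \qquad N = X^{\theta}
\]
for a small fixed $\theta>0$. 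Here $r$ is a multiplicative function supported on squarefree $n$ composed of primes in a window $[L,\, L\exp((\log_2 L)^{c})]$ with $L \approx \sqrt{\log X}\,\log_2 X$ (suitably scaled), and $r(q)=L/(\sqrt q \log q)$ on such primes, as in Bondarenko--Seip. We compare the twisted first moment $S_1=\sum_{p}\sum_{\chi}L(\tfrac12,\chi)|R(\chi)|^2\Phi(p/X)$ with $S_2=\sum_p\sum_\chi |R(\chi)|^2\Phi(p/X)$. The lower bound for the maximum then follows from $\max|L|\ge |S_1|/S_2$.

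The key steps are as follows. First, write $L(\tfrac12,\chi)$ by an approximate functional equation, and expand $|R(\chi)|^2=\sum_{m,n}r(m)r(n)\chi(m)\overline{\chi}(n)$. Summing over primes $\pi$ produces character sums $\sum_{N(\pi)\le 2X}\chi_\pi(a)$. Under GRH for Hecke $L$-functions of $\mq(\omega)$ (resp. $\mq(i)$), these sums are $O(X^{1/2+\varepsilon}N(a)^{\varepsilon})$ unless the associated Hecke character is trivial. For $\chi(m)\overline{\chi}(n)=\chi(mn^2)$, triviality means $mn^2$ is a cube. The diagonal in $S_2$ is $m=n$, giving $S_2\approx \frac{X}{\log X}\sum_n r(n)^2$. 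In $S_1$ the main contribution comes from $k m n^{2}$ being a cube, where $k$ is the variable from $L$. Using multiplicativity and the squarefree support, we factor the ratio as an Euler product $\prod_q (1+ r(q)^2+\dots)$ weighted by $q^{-1/2}$ terms. We keep only the terms $k=m^{2}n$-type structures compatible with squarefree $m,n$; in the quartic case the cube condition becomes a fourth-power condition. The off-diagonal is handled by GRH, which is why the resonator length $N=X^\theta$ must be kept short; it must also be controlled in the Bondarenko--Seip sense by discarding most of $r$'s support through the usual truncation trick. Second, evaluate the resulting Euler product ratio. On the support, each prime contributes roughly $1+ r(q)/\sqrt q$ relative to $1+r(q)^2$, hence a gain of $\sum_q r(q)/(\sqrt q(1+r(q)^2))$. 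With the above choice of $r$ this is $\asymp \frac{L}{\log L}\log_3 X$-type.

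The main obstacle is the precise optimization yielding the constants $\sqrt2/\pi$ and $\sqrt{3/2}/\pi$. We expect these to come from the density of primes splitting in $\mz[\omega]$ or $\mz[i]$ (only $q\equiv 1 \bmod 3$, resp. $\bmod 4$, with their multiple prime ideals contributing to the sum over $\chi$ of order $3$ or $4$). They should also involve the factor $\tfrac{1}{\pi}$, which we would trace to averaging $\cos$ of the argument of $\chi(q)$ over the cube/fourth roots of unity. We expect it is rather an $L^1$-vs-$L^2$ optimization over the phases, where $\mathbb E|\mathrm{Re}\,e^{i\phi}|$ introduces $2/\pi$. We would first carry out the optimization in $L$ and $c$ leaving these local factors symbolic, then compute them for each case. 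We also need a lower bound on $S_1$, which is complex, and not merely a bound on its real part. To get it we plan to restrict to $\mathrm{Re}$ by pairing $\chi$ with $\overline\chi$, which is in the family. The error terms from GRH, with $N(a)\le X^{3\theta}$, must stay below the main term $X/\log X$ times the Euler product. This forces $\theta$ small but does not affect the leading constant, since the resonator only needs length $\exp(O(\sqrt{\log X}\log_2 X))$ effectively after truncation.
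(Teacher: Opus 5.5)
Your proposal has a genuine gap: it never treats the dual half of the approximate functional equation, and that is where cubic and quartic families differ from the quadratic one. For $\chi=\chi_\varpi$ the root number $\epsilon(\chi)=\tau(\chi)/\sqrt{p}$ is, by \eqref{taug3} (resp.\ \eqref{taug}), a fixed cubic (quartic) symbol times the normalized Gauss sum $g_3(\varpi)/\sqrt{N(\varpi)}$ (resp.\ $g_4(\varpi)/\sqrt{N(\varpi)}$). After expanding $|R_\chi|^2$ and using \eqref{eq:gmult}, the dual part of $S_1$ leads, for each $l,m,n$, to
\[
\sum_{\substack{c \odd \\ (c,lmn)=1}}\frac{\Lambda_{\mq(\omega)}(c)\,\overline\chi_c(\omega(1-\omega))\,g_3(lnm^2,c)}{\sqrt{N(c)}}\,V\Big(\frac{lA}{\sqrt{N(c)}}\Big)\Phi\Big(\frac{N(c)}{X}\Big).
\]
This is a sum of Gauss sums over primes, not a sum of a Hecke character, so GRH says nothing about it.

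Using only $|\epsilon(\chi)|=1$ does not save you. Let the dual sum have length $Y$, so the first sum has length $\asymp X/Y$. The off-diagonal error of the first half is then only bounded by $X^{1+\varepsilon}Y^{-1/2}|\mathcal{R}|\sum_{m\in\mathcal{R}}\psi(m)^2$, and the dual half by $\sqrt{Y}\,X\sum_{m\in\mathcal{R}}\psi(m)^2$. Both lie below the main term $X\mathcal{A}_N\sum_{m\in\mathcal{R}}\psi(m)^2$ only if $X^{\varepsilon}|\mathcal{R}|<\mathcal{A}_N^{2}$, which fails because $\mathcal{A}_N^2=X^{o(1)}$.

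The missing ingredient is the Heath-Brown--Patterson type estimate of Proposition \ref{lemg3}: $N(r)^{1/10+\varepsilon}Z^{4/5+\varepsilon}$ for $j=3$ and $N(r)^{1/16+\varepsilon}Z^{7/8+\varepsilon}$ for $j=4$. It is applied with $r=lnm^2$ (resp.\ $lnm^3$), after noting that $c\mapsto\overline\chi_c(\omega(1-\omega))$ is a ray class character modulo $9$, together with an unbalanced choice of the AFE parameter to trade off the two error terms. Also, the terms to keep in the main part are $k=n/m$ with $m\mid n$, so that $kmn^2=n^3$ and one gets $\sum_n\frac{\psi(n)}{\sqrt n}\sum_{m\mid n}\psi(m)\sqrt m$, the quantity handled by Lemmas \ref{Le2}--\ref{Le5}. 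Terms with $k=m^2n$ do not resonate.

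Second, your account of the constants is not what happens. The claim that the size of the twists does not affect the leading constant is false for the long resonator. Lemma \ref{Le1} gives only $|\mathcal{R}|\le N$; the elements of $\mathcal{R}$ are merely bounded by $N^{a\pi^2\log_2N/(6\log_3N)+\varepsilon}$. Here $\pi^2/6=\sum_k k^{-2}$ comes from the thresholds $a\log N/(k^2\log_3N)$ defining $\mathcal{R}_k$.

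Because Proposition \ref{lemg3} depends polynomially on $N(lnm^2)$, this size enters the dual error $X^{23/20}A^{-7/10}N^{a\pi^2\log_2N/(10\log_3N)}$, which must be balanced against $X^{3/4}A^{1/2}N$. The choice $A=X^{1/3}N^{a\pi^2\log_2N/(12\log_3N)}$ forces $N^{a\pi^2\log_2N/(24\log_3N)}=X^{1/12-5\varepsilon}$, i.e.\ $\log N\log_2N/\log_3N\sim\frac{2}{a\pi^2}\log X$. Lemma \ref{Le2} with $a,\delta\to1$ then gives exactly $\frac{\sqrt2}{\pi}\frac{\log_3X}{\log_2X}\sqrt{\log X}$. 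In the quartic case the exponents $1/16,7/8$ lead to $X^{1/18}$ and $\frac{27}{18\pi^2}=\frac{3}{2\pi^2}$, whence $\sqrt{3/2}/\pi$.

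No split-prime densities or averages of $|\cos\phi|$ are involved. The same size effect is why the shape is $\frac{\log_3X}{\log_2X}\sqrt{\log X}$ rather than Darbar--Maiti's $\sqrt{\log X\log_3X/\log_2X}$.

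Conversely, suppose you really keep $n\le X^{\theta}$ as your formula for $R$ says, i.e.\ Soundararajan's short resonator. Then Proposition \ref{lemg3} controls the dual term with room to spare, and the argument yields $\exp(c\sqrt{\log X/\log_2X})$ for some $c>0$. That already implies both stated bounds, because $\frac{\log_3X}{\log_2X}\sqrt{\log X}=o\big(\sqrt{\log X/\log_2X}\big)$. Either way, the crux is the Gauss-sum term, not the constant-matching you singled out.
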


\section{Preliminaries}
\label{sec 2}

\subsection{Cubic and quartic Dirichlet characters}
\label{sec: characters}

Let $K$ be a number field and $\mathcal{O}_K, U_K$ its ring of integers and group of units, respectively.  We write $N(n)$ and $\mathrm{Tr}(n)$ for the norm and trace of any $n \in K$, respectively.   Let $\chi$ denote a Hecke character of $K$ and we say that $\chi$ is of trivial infinite type if its component at infinite places of $K$ is trivial.  We write $L(s,\chi)$ for the $L$-function associated to $\chi$ and $\zeta_{K}(s)$ the Dedekind zeta function of $K$.  We shall reserve $\varpi$ for a prime number in $\mathcal{O}_K$ throughout the paper, by which we mean that the ideal $(\varpi)$ generated by $\varpi$ is a prime ideal. \newline

  Now suppose that $K$ is a number field of class number one.  Let $\Lambda_K(n)$ denote the von Mangoldt function on $\mathcal{O}_K$ defined by
\begin{align*}
    \Lambda_K(n)=\begin{cases}
   \log N(\varpi) \qquad & (n)=(\varpi^k), \text{$\varpi$ prime}, k \geq 1, \\
     0 \qquad & \text{otherwise}.
    \end{cases}
\end{align*}  
Let $n \in \natn$ with $n \geq 2$ and $\mu_n(K):=\{ \zeta \in K^{\times}: \zeta^n=1 \}$ and suppose that $\mu_n(K)$ has $n$ elements.  We can easily compute the discriminant of $x^n-1$ as
\[ (-1)^{n(n-1)/2} R (x^n-1, nx^{n-1}) = (-1)^{n(n-1)/2} n^n P = (-1)^{n(n+1)/2 +1} n^n. \]
Here $R(f,g)$ is the resultant of the polynomials $f$ and $g$ and $P = (-1)^{n+1}$ is the product of the $n$-th roots of unity.  Hence the discriminant of $x^n-1$ is divisible only by the primes dividing $n$ in $\mathcal{O}_K$. It follows that for any prime $\varpi \in \mathcal{O}_K,  (\varpi, n)=1$, we have a bijection
\begin{align*}
    \zeta \mapsto \zeta \pmod \varpi : \mu_n(K) \rightarrow \mu_n(\mathcal{O}_K/\varpi):=\{ \zeta \in (\mathcal{O}_K/\varpi)^{\times}: \zeta^n=1 \}.
\end{align*}
Thus $(\mathcal{O}_K/\varpi)^{\times}$ contains a subgroup of order $n$ which implies that $n | N(\varpi)-1$.  For such $\varpi$,  we define the $n$-th power residue symbol $\leg{\cdot}{\varpi}_{n,K}$ in $K$ such that $\leg{a}{\varpi}_{n, K} \equiv a^{(N(\varpi)-1)/n} \pmod{\varpi}$ with $\leg{a}{\varpi}_{n,K} \in \mu_n(K)$ for any $a \in \mathcal{O}_K$, $(a, \varpi)=1$ (see \cite[Section 4.1]{Lemmermeyer}). When $\varpi | a$, we define $\leg{a}{\varpi}_{n, K} =0$.  Then these symbols can be extended to any composite $c$ with $(N(c), n)=1$ multiplicatively.  Further define $\leg {\cdot}{c}_{n, K}=1$ for $c \in U_K$. \newline

In the remainder of this section, we set $K=\mq(\omega)$ with $\omega=\exp(2\pi i/3)$ or $K=\mq(i)$ unless otherwise specified.  It is well-known that both fields have class number one and $\mathcal{O}_{K}=\mz[\omega]$, $\mz[i]$, respectively.  We use $\delta_K$ and $D_K$ for the different and discriminant of $K$, respectively.  In particular, we have $\delta_{\mq(\omega)}=\sqrt{-3}$, $\delta_{\mq(i)}=2i$, $D_{\mq(\omega)}=-3$, $D_{\mq(i)}=-4$.  We shall reserve the symbol $\leg {\cdot}{\cdot}_3$ for the cubic residue symbol $\leg {\cdot}{\cdot}_{3, \mq(\omega)}$ and $\leg {\cdot}{\cdot}_4$ for the quartic residue symbol $\leg {\cdot}{\cdot}_{4, \mq(i)}$ in this paper. \newline

Recall that every ideal in $\intz[\omega]$ co-prime to $3$ has a unique generator congruent to $1$ modulo $3$ (see \cite[Proposition 8.1.4]{BEW})
and every ideal in $\intz[i]$ co-prime to $2$ has a unique generator congruent to $1$ modulo $(1+i)^3$
(see the paragraph above Lemma 8.2.1 in \cite{BEW})). These generators are called primary. An element $n=a+b\omega$ in $\mz[\omega]$ is congruent to $1 \pmod{3}$ if and only if $a \equiv 1 \pmod{3}$, and $b \equiv
0 \pmod{3}$ (see the discussions before \cite[Proposition 9.3.5]{I&R}). \newline

If $\varpi$ is a prime such that $N(\varpi)$ is a rational prime, then restricting $\chi_{3, \varpi}:=\leg {\cdot}{\varpi}_3$ or $\chi_{4,\varpi}:=\leg {\cdot}{\varpi}_4$
on rational integers gives rise to cubic or quartic Dirichlet characters, and we shall say that these Dirichlet characters are induced by $\chi_{3, \varpi}$ or $\chi_{4,\varpi}$.  When no possible ambiguity exists, we write $\chi_{\varpi}$ for either $\chi_{3, \varpi}$ or $\chi_{4,\varpi}$. 
We have the following classification of primitive cubic and quartic Dirichlet characters of prime conductors from \cite[Lemma 2.4]{G&Zhao22-1}.
\begin{lemma}
\label{lemma:quarticclass}
 The primitive cubic Dirichlet characters of prime conductor $p$ co-prime to $3$ are induced by $\chi_{3, \varpi}$ for some prime $\varpi \in \mz[\omega]\setminus \mz$ with $N(\varpi) = p$.  The primitive quartic Dirichlet characters of prime conductor $p$ co-prime to $2$ such that their squares remain primitive are induced by $\chi_{4, \varpi}$ for some prime $\varpi \in \mz[i] \setminus \mz$ with $N(\varpi) = p$.
\end{lemma}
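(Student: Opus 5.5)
The plan is to prove both parts of Lemma \ref{lemma:quarticclass} the same way. First use the cyclicity of $(\mz/p\mz)^\times$ to pin down exactly which characters are to be described. Then show that the restrictions to $\mz$ of $\chi_{3,\varpi}$ and $\chi_{3,\overline{\varpi}}$ (resp. $\chi_{4,\varpi}$ and $\chi_{4,\overline{\varpi}}$) already exhaust them. Throughout let $n=3$, $K=\mq(\omega)$ or $n=4$, $K=\mq(i)$.

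\emph{Step 1: counting the targets.} Let $p$ be a prime with $(p,n)=1$. The group of Dirichlet characters modulo $p$ is cyclic of order $p-1$, and any nontrivial character modulo a prime $p$ is primitive of conductor $p$.
\begin{itemize}
\item For $n=3$, a primitive cubic character of conductor $p$ is a character of exact order $3$. Such characters exist only if $3\mid p-1$, and then there are exactly two, namely $\chi$ and $\chi^2=\overline{\chi}$.
\item For $n=4$, a primitive quartic character $\chi$ whose square is still primitive is nontrivial with $\chi^2\neq\chi_0$, i.e. $\chi$ has exact order $4$. This forces $p\equiv 1 \pmod 4$, and again there are exactly two such characters, $\chi$ and $\chi^3=\overline{\chi}$.
\end{itemize}

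\emph{Step 2: producing the characters from $\varpi$.} Since $p\equiv 1\pmod n$, $p$ splits in $\mathcal{O}_K$: for $n=3$ because $p\equiv1\pmod 3$, and for $n=4$ by Fermat's two-squares theorem. So $p=\varpi\overline{\varpi}$ with $N(\varpi)=p$, where $\varpi$ is a prime of $\mathcal{O}_K$ and $\varpi\notin\mz$. If $\varpi$ were rational it would be an associate of a rational integer, and its norm would then be a square, not $p$.

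The residue field $\mathcal{O}_K/\varpi$ has $p$ elements, so the natural map $\mz/p\mz\to\mathcal{O}_K/\varpi$ is an isomorphism. Hence $a\mapsto \leg{a}{\varpi}_{n,K}$ for $a\in\mz$ is multiplicative, periodic modulo $p$ and vanishes exactly on $p\mz$, so it is a Dirichlet character modulo $p$ with values in $\mu_n(K)$.

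To see its order is exactly $n$, take a primitive root $g$ modulo $p$. Then $\leg{g}{\varpi}_{n,K}\equiv g^{(p-1)/n}\pmod{\varpi}$, and $g^{(p-1)/n}$ has order exactly $n$ in $(\mz/p\mz)^\times\cong(\mathcal{O}_K/\varpi)^\times$. By the bijection $\mu_n(K)\to\mu_n(\mathcal{O}_K/\varpi)$ recalled above, $\leg{g}{\varpi}_{n,K}$ is a primitive $n$-th root of unity.

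\emph{Step 3: exhausting both characters.} For $a\in\mz$, conjugating the congruence $\leg{a}{\varpi}_{n,K}\equiv a^{(p-1)/n}\pmod\varpi$ gives $\leg{a}{\overline{\varpi}}_{n,K}=\overline{\leg{a}{\varpi}_{n,K}}$. Thus the character induced by $\overline{\varpi}$ is the conjugate of the one induced by $\varpi$. These are the two characters of exact order $n$ found in Step 1, which proves the lemma.

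The only step needing care is the order-$n$ claim in Step 2. It rests on the injectivity of $\mu_n(K)\to(\mathcal{O}_K/\varpi)^\times$, which the paper already derived from the discriminant computation for $x^n-1$, so I expect no real obstacle.
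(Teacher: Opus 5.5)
Your proof is correct, and it is the standard argument. You use cyclicity of $(\mathbb{Z}/p\mathbb{Z})^\times$ to show the characters in question are exactly the two of exact order $n$; you check, via injectivity of $\mu_n(K)\to(\mathcal{O}_K/\varpi)^\times$, that the restrictions of $\chi_{\varpi}$ and $\chi_{\overline{\varpi}}$ have exact order $n$ and are complex conjugates; and you match the two sets. The paper gives no proof of this lemma and simply quotes it from Gao--Zhao (Lemma 2.4 of the Indag.\ Math.\ paper), so there is no in-paper proof to compare against; your prime-conductor counting argument is, as far as I can tell, the same argument that underlies the cited result.
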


  In particular, the above lemma implies that cubic Dirichlet characters of prime conductor $p$ exist if and only if $p \equiv 1 \pmod 3$, in which case there are
two such characters induced by $\chi_{3, \varpi}$ or $\chi_{3, \overline{\varpi}}$, where $\varpi$ is a prime in $\mz[\omega]$ and $N(\varpi)=p$.  In a similar vein, quartic Dirichlet characters of prime conductor $p$ exist precisely for $p \equiv 1 \pmod 4$.  In that case, there are two such characters induced by $\chi_{4, \varpi}$ or $\chi_{4, \overline{\varpi}}$, where $\varpi$ is a prime in $\mz[i]$ with $N(\varpi)=p$. \newline

\subsection{The Gauss sums}
\label{sec2.4}

As usual, set $e(x)=\exp(2 \pi i x)$ and let $\chi$ be a Dirichlet character of modulus $n$. For any $r \in \mz$, the Gauss sum $\tau(r, \chi)$ is defined by
\begin{align*}
  \tau(r, \chi)=\sum_{x \bmod {n}}\chi(x)e(rx).
\end{align*}
We also write $\tau(\chi)$ for $\tau(1, \chi)$, for convenience. \newline

For any number field $K$, we further define $\widetilde{e}_K(n) =e(\mathrm{Tr}(n/ \delta_K))$ for any $n \in K$ and write $\widetilde{e}_{\omega}(z)$ and $\widetilde{e}_{i}(z)$ for $\widetilde{e}_{\mq(\omega)}(n)$ and $\widetilde{e}_{\mq(i)}(n)$, respectively. For any $n, r \in \mz[\omega], (n,3)=1$, set
\begin{align*}
 g_3(r,n) = \sum_{x \bmod{n}} \leg{x}{n}_{3} \widetilde{e}_{\omega}\leg{rx}{n}
\end{align*}
and, for any $n, r \in \mz[i], (n,2)=1$,
\begin{align*}
 g_4(r,n) = \sum_{x \bmod{n}} \leg{x}{n}_{3} \widetilde{e}_{i}\leg{rx}{n}.
\end{align*}
  We shall write $g_j(n)$ for $g_j(1,n)$ in what follows for $j=3, 4$. We note the well-known formula from \cite[p. 195]{P}):
\begin{align}
\label{2.1}
   |g_j(n)| =& \begin{cases}
    \sqrt{N(n)}, \qquad & \text{if $n$ is square-free}, \\
     0, \qquad & \text{otherwise}.
    \end{cases}
\end{align}

    Moreover, from \cite{Diac}, for $j=3$ and 4,
\begin{align}
\label{eq:gmult}
 g_j(rs,n)  = & \overline{\leg{s}{n}}_j g_j(r,n), \quad (s,n)=1.
\end{align}

If $\chi$ is induced by $\chi_{3,\varpi}$ for a primary $\varpi$, then (see  \cite[(2.7)]{G&Zhao2020-1})
\begin{align}
\label{taug3}
  \tau(\chi)  =&
    \displaystyle \overline{\leg {\sqrt{-3}}{\varpi}}_3 g_3(r, \varpi)=\overline{\leg {\omega(1-\omega)}{\varpi}}_3 g_3(r, \varpi).
\end{align}
For a quartic character $\chi_{4,\varpi}$ with a primary $\varpi$, it is shown in  \cite[(2.11)]{G&Zhao7} that
\begin{align}
\label{taug}
  \tau(\chi)=\begin{cases}
    \overline{ \leg {(-2i)^3}{n}}_4 g_4(n), \qquad & \text{if $\leg {-1}{n}_4=1$}, \\ \\
     i^{-1}\overline{\leg{(-2i)^3}{n}}_4 g_4(n), \qquad & \text{if $\leg {-1}{n}_4=-1$ }.
    \end{cases}
\end{align}  
    
   A key step in the proof of Theorem \ref{main theorem 1} is the following bound established in \cite[Theorem 4.4]{DG22} and \cite[Proposition 5.1]{G&Zhao24-02} on a Dirichlet polynomial formed from cubic and quartic Gauss sums, respectively.
\begin{prop}
\label{lemg3} With the notation as above, let $j=3$, $4$ and $\psi$ be any ray class character modulo $9$ (resp. $16$) when $j=3$ (resp. $j=4$) and let $r$ be any primary element in $\mathcal O_K$. Then we have for real $Z>1$ and $\varepsilon>0$, we have
\begin{align*}
\begin{split}
  H_Z(r;\psi) := \sum_{\substack {c \odd \\ (c, r)=1 \\ N(c) \leq Z}} \frac {\Lambda_K(c)\psi(c) g_{K,j}(r, c) }{\sqrt{N(c)}}  \ll &
\begin{cases}
   \min (Z^{1+\varepsilon},  N(r)^{1/10+\varepsilon}Z^{4/5+\varepsilon}), & j=3, \\ \\
   \min (Z^{1+\varepsilon},  N(r)^{1/16+\varepsilon}Z^{7/8+\varepsilon}), & j=4.
\end{cases}
\end{split}
\end{align*}
\end{prop}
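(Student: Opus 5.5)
The first bound, $H_Z(r;\psi)\ll Z^{1+\varepsilon}$, is trivial. By \eqref{2.1}, $|g_j(r,c)|\le \sqrt{N(c)}$ whenever $c$ is a prime power with $(c,r)=1$: for $c=\varpi$ this follows from \eqref{eq:gmult} and \eqref{2.1}, and for higher prime powers one uses the standard evaluation (or vanishing) of $g_j(r,\varpi^k)$. The summand is then at most $\Lambda_K(c)$, and the prime ideal theorem in $K$ gives $\sum_{N(c)\le Z}\Lambda_K(c)\ll Z$. Prime powers with $k\ge 2$ contribute only $O(Z^{1/2+\varepsilon})$ in any case. So the real content is the second bound, and I would follow the Heath-Brown--Patterson strategy.

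First I would decompose $\Lambda_K$ by Vaughan's identity (or Heath-Brown's identity) over $\mathcal O_K$, restricted to primary elements. This writes $H_Z(r;\psi)$ as a sum of $O(\log^{C}Z)$ pieces of two kinds.
\begin{itemize}
\item Type I sums: $\sum_{N(a)\le A}\alpha_a\sum_{N(b)\le Z/N(a)} \psi(ab)\, g_j(r,ab)/\sqrt{N(ab)}$, with $|\alpha_a|\ll N(a)^\varepsilon$ and a smooth (or log-weighted) inner sum.
\item Type II sums: $\sum\sum_{N(a)\sim A,\,N(b)\sim B}\alpha_a\beta_b\, \psi(ab)\, g_j(r,ab)/\sqrt{N(ab)}$, with $A,B\ge U$ and $AB\le Z$.
\end{itemize}
In both cases I would use twisted multiplicativity: for coprime primary $a,b$,
\[
g_j(r,ab)=\overline{\leg{a}{b}}_j\,\overline{\leg{b}{a}}_j\, g_j(r,a)g_j(r,b),
\]
which follows from \eqref{eq:gmult} and the Chinese remainder theorem. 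By $j$-th power reciprocity this reduces to $\overline{\leg{b}{a}}_j^{\,2}$ (up to a ray class character mod $9$ or $16$) times $g_j(r,a)g_j(r,b)$.

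For the type I sums, the inner sum over $b$ is a partial sum of coefficients of the Dirichlet series $\sum_{c} g_j(ra',c)\psi(c)N(c)^{-s}$. Here $a'$ absorbs the character twist via \eqref{eq:gmult}, and the sum includes non-squarefree terms. By Kubota--Patterson theory of metaplectic (cubic, resp. quartic) theta functions and Eisenstein series, this series has meromorphic continuation and a functional equation. Its only possible pole in $\Re s>1/2$ is at $s=1+1/j$ in the normalization used there, and its residue vanishes unless $r$ is essentially a $j$-th power; after the $1/\sqrt{N(c)}$ normalization this produces a main term of size $Z^{5/6}$ (resp. $Z^{3/4}$) times a factor in $r$ that is admissible. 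Combining convexity in the critical strip with Perron's formula, and removing the squarefree restriction by Möbius inversion over $j$-th powers, the inner sum is $\ll N(ra)^{c_j+\varepsilon}(Z/N(a))^{\theta_j+\varepsilon}$ for suitable exponents $c_j,\theta_j<1$. Summing over $N(a)\le A$ then gives a bound of the shape $N(r)^{c_j}A^{1-\theta_j+c_j}Z^{\theta_j}$.

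For the type II sums, I would apply Cauchy--Schwarz over $a$ and then the large sieve for $j$-th order characters: Heath-Brown's cubic large sieve for $j=3$, and its quartic analogue (Gao--Zhao) for $j=4$. This gives roughly
\[
\ll (AB)^{\varepsilon}\bigl(A+B+(AB)^{2/3}\bigr)^{1/2}\|\alpha\|\,\|\beta\|\big/\sqrt{AB}
\]
times normalizations, hence a bound $Z^{1-\delta}$ whenever $A$ and $B$ are both at least a small power of $Z$. I expect the \emph{main obstacle} to be the bookkeeping of the type I bound. One has to make the dependence on $N(r)$ explicit in the functional equation and convexity estimate, because the conductor of the metaplectic series grows with $r$. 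Then one must balance the type I range $A\le U$ against the type II range $A,B\ge U$ so that the exponents come out as $N(r)^{1/10}Z^{4/5}$ for $j=3$ and $N(r)^{1/16}Z^{7/8}$ for $j=4$. Concretely, I would first fix the large sieve exponents, then choose $U=Z^{\kappa}N(r)^{-\lambda}$ to equalize the two contributions, and verify that this reproduces the stated powers (this is the computation carried out in \cite{DG22} and \cite{G&Zhao24-02}).
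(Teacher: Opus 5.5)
\textbf{The gap: for $j=3$ the balancing step you defer cannot produce the stated exponent.}

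The paper gives no proof of this proposition; it quotes it from \cite[Theorem 4.4]{DG22} and \cite[Proposition 5.1]{G&Zhao24-02}. Your trivial bound is correct: for $(r,\varpi)=1$ and $k\ge 2$ one has $g_j(r,\varpi^k)=0$, and $|g_j(r,\varpi)|=\sqrt{N(\varpi)}$ by \eqref{eq:gmult} and \eqref{2.1}. Your plan for the second bound is the Heath-Brown--Patterson method those sources use: a combinatorial decomposition of $\Lambda_K$, type I sums via the Kubota--Patterson series, and type II sums via the cubic or quartic large sieve. However, you never compute an exponent. The quantities $c_j,\theta_j,\kappa,\lambda$ are left free, and the balancing is delegated to the papers the proposition is quoted from. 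So, like the paper, your argument ultimately rests on the citation.

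Your own type II estimate contains the term $(AB)^{2/3}$. With $\|\alpha\|\ll A^{1/2+\varepsilon}$, $\|\beta\|\ll B^{1/2+\varepsilon}$ and $AB\asymp Z$, it gives $Z^{\varepsilon}\bigl(Z^{1/2}(A+B)^{1/2}+Z^{5/6}\bigr)$, and the $Z^{5/6}$ term does not depend on $U$. Since $N(r)^{1/10}Z^{4/5}<Z^{5/6}$ whenever $N(r)<Z^{1/3}$, no choice $U=Z^{\kappa}N(r)^{-\lambda}$ reaches $N(r)^{1/10+\varepsilon}Z^{4/5+\varepsilon}$. At best the method recovers Heath-Brown's $Z^{5/6+\varepsilon}$.

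To my knowledge, no other method can do better either. Take $r=1$ and $\psi$ either trivial or $\overline{\leg{\sqrt{-3}}{\cdot}}_3$. The latter is a ray class character modulo $9$ by the supplementary laws, and by \eqref{eq:gmult} it converts between the additive characters $\widetilde e_{\omega}(x/c)$ and $e(\mathrm{Tr}(x/c))$. For one of these choices, $H_Z(1;\psi)$ is the prime sum in Patterson's conjecture. Under GRH this sum is genuinely of order $Z^{5/6}$, by the work of Dunn and Radziwi{\l\l}, who also show that Heath-Brown's cubic large sieve is sharp. Their result is for smoothly weighted sums, which suffices by partial summation.

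Relatedly, your claim that the residue at the pole vanishes unless $r$ is essentially a cube is not right. By Patterson's work the residue is proportional to the cubic theta coefficient $\tau(r)$, which is nonzero for squarefree $r$. This pole is exactly the source of the $Z^{5/6}$ bias.

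So the $j=3$ case as transcribed should carry an extra $Z^{5/6+\varepsilon}$ term (or a different exponent), and you should check the exact statement of \cite[Theorem 4.4]{DG22}. For $j=4$ the corresponding barrier is $Z^{7/8}$, coming from the $(AB)^{3/4}$ term in the quartic large sieve. That is compatible with $N(r)^{1/16}Z^{7/8}$, so there your plan is not obstructed, though it is still not carried out.
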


\subsection{The approximate functional equation}
\label{sec: afe}

   Let $\chi$ be any primitive Dirichlet character modulo $q$ and $\af=0$ or $1$ be given by $\chi(-1)=(-1)^{\af}$. We define
\begin{align*}
  \Lambda(s, \chi)= \left( \frac {\pi}{q} \right)^{-(s+\af)/2}\Gamma \left( \frac {s+\af}{2} \right)L(s, \chi).
\end{align*}
   Then $\Lambda(s, \chi)$ extends to an entire function on $\mc$ when $\chi \neq \chi_0$ and satisfies the functional equation (see \cite[Theorem 4.15]{iwakow})
\begin{align*}
  \Lambda(1-s, \overline \chi)=\frac {i^{\af}q^{1/2}}{\tau(\chi)} \Lambda(s, \chi).
\end{align*}

Let $G(s)$ be any even function which is holomorphic and bounded in the strip $-4<\Re(s)<4$ satisfying $G(0)=1$. From \cite[Theorem 5.3]{iwakow}, we have the following approximate functional equation for Dirichlet $L$-functions.
\begin{prop} \label{prop:AFE}
Suppose $\chi$ be a primitive Dirichlet character modulo $q$. Let $A$ and $B$ be positive real numbers with $AB = q$.  Then we have
\begin{align*}
L \left( \frac{1}{2} , \chi \right) = \sum_{m=1}^{\infty} \frac{\chi(m)}{m^{1/2 }} V_{\af}\left(\frac{m}{A}\right)
+ \epsilon(\chi) \sum_{m=1}^{\infty} \frac{\overline{\chi}(m)}{m^{1/2}} V_{\af}\left(\frac{m}{B}\right),
\end{align*}
  where
\begin{align*}
\epsilon(\chi) = i^{-\af} q^{-1/2} \tau(\chi), \quad V_{\af}(x) = \frac{1}{2\pi i} \int\limits_{(2)} \frac{G(s)}{s} \gamma_{\af}(s) x^{-s} \dif s,  \quad \gamma_{\af}(s) = \pi^{-s/2} \frac{\Gamma\left(\tfrac{1/2 + \af+ s}{2}\right)}{\Gamma\left(\tfrac{1/2 + \af}{2}\right)}.
\end{align*}
\end{prop}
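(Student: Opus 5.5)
The plan is the standard contour-shifting argument for the completed $L$-function, as in \cite[Theorem 5.3]{iwakow}, which I would reproduce directly. Write $\gamma(s,\chi)=(\pi/q)^{-(s+\af)/2}\Gamma\big(\tfrac{s+\af}{2}\big)$, so that $\Lambda(s,\chi)=\gamma(s,\chi)L(s,\chi)$. A direct computation gives
\begin{align*}
\frac{\gamma(\tfrac12+s,\chi)}{\gamma(\tfrac12,\chi)}=q^{s/2}\gamma_{\af}(s).
\end{align*}
Using $AB=q$, we have $q^{s/2}(A/B)^{s/2}=A^{s}$. I would therefore consider
\begin{align*}
I=\frac{1}{2\pi i}\int\limits_{(2)} \frac{\Lambda(\tfrac12+s,\chi)}{\gamma(\tfrac12,\chi)}\Big(\frac{A}{B}\Big)^{s/2}\frac{G(s)}{s}\,\dif s
=\frac{1}{2\pi i}\int\limits_{(2)} L(\tfrac12+s,\chi)\,\gamma_{\af}(s)A^{s}\frac{G(s)}{s}\,\dif s .
\end{align*}

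On $\Re(s)=2$ the Dirichlet series converges absolutely. Expanding it and integrating term by term gives
\begin{align*}
I=\sum_{m\ge1}\frac{\chi(m)}{m^{1/2}}V_{\af}\Big(\frac mA\Big).
\end{align*}

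Next I shift the contour to $\Re(s)=-2$. Since $\chi$ is primitive and nonprincipal (here $q>1$), $\Lambda(s,\chi)$ is entire. $G$ is holomorphic in the strip, and $\Gamma\big(\tfrac{1/2+\af+s}{2}\big)$ has no poles for $\Re(s)\ge-2$, because its first pole is at $s=-\tfrac12-\af$, which lies to the right of $-2$ only as a pole of $\Gamma$ and is cancelled by the zeros of $L$. It is cleanest to phrase everything via the entire function $\Lambda$. Hence the only pole crossed is the simple pole at $s=0$, whose residue is $L(\tfrac12,\chi)$ because $G(0)=1$ and $\gamma_{\af}(0)=1$.

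The shift is justified by the exponential decay of $\Gamma$ in vertical strips (Stirling's formula), together with the boundedness of $G$ and the at most polynomial growth of $L(\tfrac12+s,\chi)$ there (by Phragmén--Lindelöf and the functional equation). This is the only analytic point requiring care, and it is routine.

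For the integral on $\Re(s)=-2$, substitute $s\mapsto -s$. The orientation reversal and the factor $1/s$ together produce a sign that turns $L(\tfrac12,\chi)-I_{-}$ into $L(\tfrac12,\chi)=I+I'$, where $I'$ is an integral over $\Re(s)=2$ involving $\Lambda(\tfrac12-s,\chi)(B/A)^{s/2}G(s)/s$, using that $G$ is even. Applying the functional equation with $\chi$ replaced by $\overline\chi$ gives
\begin{align*}
\Lambda(\tfrac12-s,\chi)=\frac{i^{\af}q^{1/2}}{\tau(\overline\chi)}\Lambda(\tfrac12+s,\overline\chi).
\end{align*}
Then $\tau(\chi)\tau(\overline\chi)=\chi(-1)q=(-1)^{\af}q$ turns the root number into $i^{\af}(-1)^{\af}q^{-1/2}\tau(\chi)=i^{-\af}q^{-1/2}\tau(\chi)=\epsilon(\chi)$. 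Since $\gamma(\tfrac12,\overline\chi)=\gamma(\tfrac12,\chi)$ and $q^{s/2}(B/A)^{s/2}=B^{s}$, expanding $L(\tfrac12+s,\overline\chi)$ yields $\epsilon(\chi)\sum_m\overline\chi(m)m^{-1/2}V_{\af}(m/B)$, which completes the proof.

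The main obstacle is purely bookkeeping: tracking the sign from reversing the contour and the factors $i^{\pm\af}$ and $(-1)^{\af}$ so that exactly $\epsilon(\chi)$ appears.
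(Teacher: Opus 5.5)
Your argument is correct and is exactly the contour-shift proof of \cite[Theorem 5.3]{iwakow}, which is all the paper invokes for this proposition. Your bookkeeping of the sign from $s\mapsto -s$, and of $\tau(\chi)\tau(\overline\chi)=(-1)^{\af}q$ producing $\epsilon(\chi)$, checks out. One slip: $\Gamma\big(\tfrac{1/2+\af+s}{2}\big)$ does have a pole at $s=-\tfrac12-\af$ inside $-2<\Re(s)<0$, so the sentence claiming it has none is wrong as written. Nothing is affected, because you phrase the integrand through the entire function $\Lambda(\tfrac12+s,\chi)$, and that pole is cancelled by a trivial zero of $L(\tfrac12+s,\chi)$.
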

If $\chi$ is a cubic character, then $\chi(-1)=\chi((-1)^3)=1$.  It follows that $\af=0$ in this case. We now set $G(s)=1$ and for brevity write $V$ for $V_0$.  Hence if $\chi$ is a cubic primitive Dirichlet character, then
\begin{align}
\label{approxfunc}
L \left( \tfrac{1}{2} , \chi \right) = \sum_{m=1}^{\infty} \frac{\chi(m)}{m^{1/2 }} V\left(\frac{m}{A}\right) + \epsilon(\chi) \sum_{m=1}^{\infty} \frac{\overline{\chi}(m)}{m^{1/2}} V\left(\frac{m}{B}\right). 
\end{align}    
For the quartic case, again set $G(s)=1$ and relabel $V_{\af}$ by $V_{\pm}$ according to $\af=0$ or $1$, arriving at
\begin{align}
\label{approxfuncquartic}
L \left( \tfrac{1}{2} , \chi \right) =
\begin{cases} 
\displaystyle \sum_{m=1}^{\infty} \frac{\chi(m)}{m^{1/2 }} V_{+}\left(\frac{m}{A}\right)
+ \epsilon(\chi) \sum_{m=1}^{\infty} \frac{\overline{\chi}(m)}{m^{1/2}} V_{+}\left(\frac{m}{B}\right), \quad \chi(-1)=1, \\ \\
\displaystyle  \sum_{m=1}^{\infty} \frac{\chi(m)}{m^{1/2 }} V_{-}\left(\frac{m}{A}\right)
+ \epsilon(\chi) \sum_{m=1}^{\infty} \frac{\overline{\chi}(m)}{m^{1/2}} V_{-}\left(\frac{m}{B}\right), \quad \chi(-1)=-1. 
\end{cases}
\end{align}    

    Note that for any real number $x>0$,
\begin{align*}
 V(x) = \frac{1}{2 \pi i} \int\limits\limits_{(2)}  \left(\frac{1}{\pi}\right)^{s/2}
 \frac {\Gamma(s/2+1/4)}{\Gamma(1/4)}  x^{-s} \frac {\dif s}{s}.
\end{align*}

  Similar to what is shown in \cite[Lemma 2.1]{sound1}, the function $V(x)$ is real-valued, smooth on $(0, +\infty)$ and for any $\varepsilon>0$, 
\begin{equation} 
\label{2.07}
      V\left (x \right) = 1+O(x^{1/2-\varepsilon}) \;\; \mbox{for} \;\; 0<x <1   \quad \mbox{and} \quad V^{(j)}\left (x \right) =O(e^{-x}) \;\; \mbox{for}
      \;\; x >0, \; j \geq 0.
\end{equation}
It is shown in \cite{Gao2026-6} that $V(x)$ is positive for $x \geq 0$. The above properties are valid for $V_{\pm}(x)$ as well. \newline
  
   When $\chi=\chi_{\varpi}$ for some primary $\varpi \in \mz[\omega] \setminus \mz$ with $N(\varpi) = p$, then we deduce from \eqref{2.1}, \eqref{taug3} and \eqref{approxfunc} that
\begin{align}
\label{approxfuncubic}
L \left( \tfrac{1}{2} , \chi_{\varpi} \right) = \sum_{m=1}^{\infty} \frac{\chi_{\varpi}(m)}{m^{1/2 }} V\left(\frac{m}{A}\right)
+  N(\varpi)^{-1/2}  \sum_{m=1}^{\infty} \frac{\overline \chi_{\varpi}(\omega(1-\omega)m) g_3(\varpi)}{m^{1/2}} V\left(\frac{m}{B}\right). 
\end{align}   

If $\chi=\chi_{\varpi}$ for some primary prime $\varpi \in \mz[i] \setminus \mz$ with $N(\varpi) = p$, then \eqref{2.1}, \eqref{taug} and \eqref{approxfuncquartic} similarly yield
\begin{align}
\label{approxfuncquarticprime}
L \left( \tfrac{1}{2} , \chi_{\varpi} \right) = 
\begin{cases} 
\displaystyle \sum_{m=1}^{\infty} \frac{\chi_{\varpi}(m)}{m^{1/2}} V_{+}\left(\frac{m}{A}\right)
+  N(\varpi)^{-1/2}  \sum_{m=1}^{\infty} \frac{\overline \chi_{\varpi}((-2i)^3m) g_4(\varpi)}{m^{1/2}} V_{+}\left(\frac{m}{B}\right), \quad \chi_{\varpi}(-1)=1, \\ \\
\displaystyle\sum_{m=1}^{\infty} \frac{\chi_{\varpi}(m)}{m^{1/2}} V_{-}\left(\frac{m}{A}\right)
-  N(\varpi)^{-1/2}  \sum_{m=1}^{\infty} \frac{\overline \chi_{\varpi}((-2i)^3m) g_4(\varpi)}{m^{1/2}} V_{-}\left(\frac{m}{B}\right), \quad \chi_{\varpi}(-1)=-1.
\end{cases}
\end{align}

\subsection{Smoothed character sums}
\label{smoothsum}

We reserve $\psi_{m}$ for the Hecke characters in $K$ with $\psi_{m}((n)) = \leg{m}{n}_3$ for $n \in \mq(\omega)$ coprime to $3$
   or $\psi_{m}((n)) = \leg{m}{n}_4$ for $n \in \mq(i)$ coprime to $2$.
 It is shown in \cite[Section 2.1]{B&Y} and \cite[Section 2.1]{G&Zhao7} that $\psi_m$ is either a cubic  Hecke character of trivial infinite type
  modulo $9m$ or a   quartic Hecke character of trivial infinite type modulo $16m$. We define $\delta_{n=\text{cubic}}$ to be $1$ or $0$ depending on whether $n$ equals a cube or not, and $\delta_{n=\text{quartic}}$ similarly.  Analogous to \cite[Proposition 1]{Radziwill&Sound}, we need to bound smoothed sums of cubic and quartic characters in this paper. In the sequel, let $\Phi$ be a smooth, non-negative function compactly supported on $[1,2]$ and $\Phi(x) =1$ for $x\in [5/4,7/4]$.  Denote the Mellin transform of $\Phi(x)$ by ${\widehat \Phi}(s)$ so that for any complex number $s$,
\begin{equation*}
{\widehat \Phi}(s) = \int\limits_{0}^{\infty} \Phi(x)x^{s}\frac {\dif x}{x}.
\end{equation*}
   We have the following result on the smoothed cubic and quartic Dirichlet character sums.
\begin{lemma}
\label{PropDirpoly}  With the notation as above and assuming the truth of GRH, we have, for large $X$ and any positive integer $c \in \mz$, 
\begin{equation} \label{cubiccharsum}
\sum_{\substack{(p,3)=1}} \ \sumstar_{\substack{\chi \shortmod{p} \\ \chi^3 = \chi_0}} (\log p)\chi(c) \Phi\Big(\frac{p}{X}\Big)=
\displaystyle \delta_{c=\text{cubic}} {\widehat \Phi}(1) X  + O( X^{1/2+\varepsilon}\log (c+2) ),
\end{equation}
and
\begin{equation} \label{quarticcharsum}
\sum_{(p,2)=1} \ \sumstar_{\substack{\chi \shortmod{p} \\ \chi^4 = \chi_0}} \chi(c) \Phi\Big(\frac{p}{X}\Big)=
\displaystyle \delta_{c=\text{quartic}} {\widehat \Phi}(1) X + O( X^{1/2+\varepsilon}\log (c+2) ).
\end{equation}
Here and after the asterisk on the sum over $\chi$ in \eqref{cubiccharsum} restricts the sum to primitive characters and $\chi_0$ denotes the principal character, while the asterisk on the sum over $\chi$ in \eqref{quarticcharsum} limits the sum to primitive characters $\chi$ such that $\chi^2$ remains primitive.
\end{lemma}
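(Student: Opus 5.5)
We prove \eqref{cubiccharsum}; \eqref{quarticcharsum} follows the same route with $\mz[i]$, primary meaning $\equiv 1 \pmod{(1+i)^3}$, and $\psi_c$ of modulus $16c$.

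\emph{Step 1: parametrize the characters.} By Lemma \ref{lemma:quarticclass}, for $p \equiv 1 \pmod 3$ the primitive cubic characters modulo $p$ are exactly those induced by $\chi_{3,\varpi}$ and $\chi_{3,\overline{\varpi}}$, where $\varpi$ is a primary prime of $\mz[\omega]\setminus\mz$ with $N(\varpi)=p$. For $p \equiv 2 \pmod 3$ there are none. Hence the left side equals
\[
\sum_{\substack{\varpi \text{ primary},\ \varpi\notin \mz\\ N(\varpi) \text{ prime}}} \log N(\varpi)\, \leg{c}{\varpi}_3 \Phi\Big(\frac{N(\varpi)}{X}\Big),
\]
since the pair $\varpi,\overline{\varpi}$ accounts for the two characters. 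We now add back primes $\varpi$ whose norm is not a rational prime (the inert primes $q \equiv 2 \pmod 3$, with $N=q^2$) and prime powers, so that the sum runs over all primary $n$ weighted by $\Lambda_K(n)$. These extra terms have norms that are squares or higher powers. Their number with $N(n)\asymp X$ is $O(X^{1/2+\varepsilon})$, so they contribute $O(X^{1/2+\varepsilon})$. The finitely many primes dividing $3$ or $c$ can be removed or added at cost $O(\log(c+2))$, as they number at most $O(\log c)$ and each contributes $O(\log X)$. Writing $\leg{c}{n}_3=\psi_c((n))$, we are reduced to evaluating
\[
S := \sum_{n \text{ primary}} \Lambda_K(n)\psi_c((n))\Phi\Big(\frac{N(n)}{X}\Big).
\]

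\emph{Step 2: Mellin inversion and GRH.} We have
\[
S=\frac{1}{2\pi i}\int_{(2)} -\frac{L'}{L}(s,\psi_c)\,\widehat\Phi(s)X^s\,\dif s,
\]
up to the primary-generator normalization. That normalization is handled by averaging over ray class characters modulo $9$: each ideal coprime to $3$ has a unique primary generator, and this produces the factor $1/\#$ that is consistent with counting ideals. If $c$ is a cube, then $\psi_c$ is principal (outside primes dividing $3c$), and the pole of $-\zeta_K'/\zeta_K$ at $s=1$ gives the main term $\widehat\Phi(1)X$. If $c$ is not a cube, then $\psi_c$ is a nontrivial cubic Hecke character of trivial infinite type and modulus dividing $9c$, so there is no pole. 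We then shift the contour to $\Re(s)=1/2+\varepsilon$. Under GRH, $\frac{L'}{L}(s,\psi_c)\ll \log(|s|+2)\,\log(N(9c)+2)$ there, with a $\log$ factor uniform in $X$, at $\varepsilon$-loss. The rapid decay of $\widehat\Phi(s)$ in vertical strips makes the integral converge, giving $O(X^{1/2+\varepsilon}\log(c+2))$.

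\emph{Main obstacle.} The delicate point is bookkeeping rather than analysis. First, the main term must come out exactly as $\widehat\Phi(1)X$: we need to check that the density of primary split primes with $\Lambda_K$-weight, combined with the factor $2$ from $\varpi,\overline{\varpi}$, matches $\widehat\Phi(1)X$. Second, the cube condition $\delta_{c=\text{cubic}}$ must be exactly the condition that $\psi_c$ is trivial on primes coprime to $3c$. This needs care because $\leg{c}{\varpi}_3$ for rational $c$ may be trivial when $c$ equals a unit times a cube in $\mz[\omega]$. For rational $c$, however, $-1=(-1)^3$ and $c$ being a cube in $\mz[\omega]$ is equivalent to $c$ being a cube in $\mz$. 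The quartic case has the same issue with $-4=(1+i)^4$ and units such as $-1$, which can make $\psi_c$ trivial for $c=-4m^4$; this is avoided because $c$ is positive. Finally, we must confirm that the $\log p$ weight missing from \eqref{quarticcharsum} is harmless. If that case is literally unweighted, we recover it from the weighted version by partial summation, at the cost of a factor $1/\log X$ in the main term. If that factor is not acceptable, we restore the $\log p$ weight; we expect the statement intends it to match \eqref{cubiccharsum}.
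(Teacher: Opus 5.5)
Your proposal follows the paper's proof step for step. You parametrize the characters via Lemma \ref{lemma:quarticclass} and absorb the inert rational primes and higher prime powers into an $O(X^{1/2+\varepsilon})$ error. You then apply Mellin inversion to $-L'/L(s,\psi_c)$ and dispose of the primes dividing $3c$ at negligible cost. Finally you pick up the pole at $s=1$ exactly when $c$ is a cube and shift to $\Re(s)=1/2+\varepsilon$ using the GRH bound for $L'/L$. No averaging over ray class characters is needed, since primary elements already correspond bijectively to ideals coprime to $3$.

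Your suspicion about \eqref{quarticcharsum} is also correct. As literally stated it lacks the $\log p$ weight, so its main term would be of size $X/\log X$ rather than ${\widehat \Phi}(1)X$. The paper's later use in $\mathcal{T}_1,\mathcal{T}_2$ relies on the $\log p$-weighted version, which is what the argument proves.
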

\begin{proof}
  As the proofs for both cases are similar, we shall only prove \eqref{cubiccharsum}. Thus, in the remaining of the proof, we shall work exclusively over the number field $K=\mq(\omega)$ with all the related notations (e.g. primary) being referred to those related to this $K$.  Let $CS$ denote the left-hand side of \eqref{cubiccharsum}.  Applying Lemma \ref{lemma:quarticclass} gives
\begin{align}
\label{CS}
\begin{split}
 CS=& \sum_{\substack{ \varpi \odd \\ \varpi \notin \mz }}(\log N(\varpi))\chi_{\varpi}(c)\Phi\left(\frac{N(\varpi)}{X}\right)\\
=& \sum_{\substack{ \varpi \odd  }}(\log N(\varpi))\chi_{\varpi}(c)\Phi\left(\frac{N(\varpi)}{X}\right) +O\Big( \Big| \sum_{\substack{ p \in \mz \\ p \neq 3}}(\log N(p))\chi_{p}(c)\Phi\Big(\frac{N(p)}{X}\Big) \Big| \Big).  
\end{split}
\end{align}
  
For primes $p \in \mz$, $N(p)=p^2$. It follows that for any $\varepsilon>0$, 
\begin{align}
\label{CSerror}
\begin{split}
 \sum_{\substack{ p \in \mz \\ p \neq 3}}(\log N(p))\chi_{p}(c)\Phi\left(\frac{N(p)}{X}\right) \ll \sum_{\substack{ p \in \mz}}(\log p)\Phi\left(\frac{p^2}{X}\right)  \ll X^{1/2+\varepsilon}. 
\end{split}
\end{align}    
Thus, from \eqref{CS} and \eqref{CSerror},
\begin{align}
\label{CS1}
\begin{split}
 CS
=& \sum_{\substack{ \varpi \odd  }}(\log N(\varpi))\chi_{\varpi}(c)\Phi\left(\frac{N(\varpi)}{X}\right) +O\big ( X^{1/2+\varepsilon}\big ).  
\end{split}
\end{align}

 As $\Phi$ has compact support, we get
\begin{align} \label{CS2}
 CS= \sum_{\substack{ \varpi \odd  }}(\log N(\varpi))\chi_{\varpi}(c)\Phi\left(\frac{N(\varpi)}{X}\right)+O\big ( X^{1/2+\varepsilon}\big )
 = \sum_{n \odd } \chi_{n}(c) \Lambda_{\omega}(n) \Phi \left( \frac {N(n)}X \right)
 +O \left( X^{1/2+\varepsilon}\right), 
\end{align}
Via standard arguments, replacing $\log N(\varpi)$ by $\Lambda_{\omega}(n)$ incur an error that can be absorbed into the $O$-term above. \newline
 
Now apply Mellin inversion renders
\begin{align} \label{CS4}
\sum_{n \odd }  \chi_{n}(c) \Lambda_{\mq(\omega)}(n) \Phi \left( \frac {N(n)}X \right) = -\frac {1}{2\pi i}\int\limits_{(2)} \frac {L'(s, \psi_c)}{L(s, \psi_c)} \widehat{\Phi}(s)X^s \dif s.
\end{align}

  Note that integration by parts shows that $\widehat{\Phi}(s)$ satisfies the bound
\begin{align} \label{boundsforphi}
  \widehat{\Phi}(s) \ll \min (1, |s|^{-1}(1+|s|)^{-E}), \; \mbox{for all} \; \Re(s) > 0 \; \mbox{and integers} E>0.
\end{align}

  As discussed in Section \ref{sec: characters}, the Hecke character $\psi_c$ is a cubic Hecke character of trivial infinite type
  modulo $9c$. Suppose that it is induced by a primitive Hecke character $\widetilde \psi_c$ of trivial infinite type modulo $c'$, then $c'|9c$. Moreover, 
\begin{align*}
  L(s, \psi_c)=L(s, \widetilde\psi_c)\prod_{\substack{ (\varpi) \\ \varpi \mid 3c}}(1-\widetilde\psi_c(\varpi)N(\varpi)^{-s}).
\end{align*}   
  It follows that
\begin{align}
\label{logLder}
  -\frac {L'(s, \psi_c)}{L(s, \psi_c)}=-\frac {L'(s, \widetilde \psi_c)}{L(s, \widetilde \psi_c)}-\sum_{\substack{ (\varpi) \\ \varpi \mid 3c}}\frac {\widetilde\psi_c(\varpi)(\log N(\varpi))N(\varpi)^{-s}}{1-\widetilde\psi_c(\varpi)N(\varpi)^{-s}}.
\end{align}     
For $\Re(s) \geq 1/2$, we have
\begin{align}
\label{sumprimebound}
 \sum_{\substack{ (\varpi) \\ \varpi \mid 3c}}\frac {\widetilde\psi_c(\varpi)(\log N(\varpi))N(\varpi)^{-s}}{1-\widetilde\psi_c(\varpi)N(\varpi)^{-s}} \ll \log (N(c)+2).
\end{align}    
  Also, it follows from \cite[Theorem 5.17]{iwakow}) that under GRH, we have for $\Re(s) \geq 1/2+\varepsilon, |s-1|>1/100$,
\begin{align}
\label{Lderbound}
  \frac {L'(s, \widetilde \psi_c)}{L(s, \widetilde \psi_c)}  \ll \log\big ((N(c)+2)(1+|s|)\big).
\end{align}
  
  We now evaluate the integral in \eqref{CS4} by shifting the line of integration to $\Re(s)=1/2+\varepsilon$.  We encounter a pole at $s=1$ with residue
  $\widehat{\Phi}(1)X$ only if $c$ is a perfect cube.  The integration on the line $\Re(s)=1/2+\varepsilon$ can be estimated as $O(X^{1/2+\varepsilon}\log  (N(c)+2))$ by using \eqref{boundsforphi} and \eqref{logLder}--\eqref{Lderbound}.  As $c \in \mz$, we have $N(c)=c^2$. This, together with \eqref{CS4}, leads to \eqref{cubiccharsum} and completes the proof.
\end{proof}

\section{Proof of the Theorem \ref{main theorem 1}}
\subsection{Initial Treatments}

  Let $[x]$ denote the largest integer not exceeding $x$ for any real $x$ and $N \ll X$ be a large number to be fixed later.  Suppose $a\in (1, \infty)$ and $\delta\in (0,1)$ are fixed with $1<a< 1/\delta$. We define for $k=1, \ldots, [(\log_{2}N)^{\delta}]$ the set $\mathcal{P}_{k}$ to consist all primes $p$ such that
 \[ e^{k}\log N \log_{2} N < p \le e^{k+1}\log N \log_{2} N. \]
Let $\mathcal{P} = \cup_k \mathcal{P}_{k}$.  Thus $\mathcal{P}$ consists of all primes $p$ satisfying
\[ e\log N \log_{2} N< p \le e^{(\log_{2} N)^{\delta}} \log N \log_{2} N. \]
Denote by $ \mathcal{R}_{k}$ the set of positive integers that have at least $\frac{a\log N}{k^{2}\log_{3}N}$ prime divisors in $\mathcal{P}_{k}$. \newline
 
Define a multiplicative function $\psi$ supported on the set of square-free integers in the following manner.  Let $\psi(p)=0$ for $p \notin \mathcal{P}$ and for any $p \in\mathcal{P}$,
\begin{align}
\label{psidef}
\psi(p)=\sqrt{\frac{\log N \log_{2}N }{\log_{3}N}}p^{-1/2}\left(\log p-\log(\log N \log_{2}N)\right)^{-1}.
\end{align}
 Further set
\begin{align}
\label{Rdef}
\mathcal{R}:=\rm{supp}(\psi)\setminus\displaystyle \bigcup_{k=1}^{[(\log_{2}N)^{\delta}]}\mathcal{R}_{k}.
\end{align} 
  The resonator for $L(\frac 1 2, \chi)$ for any Dirichlet character $\chi$ is then defined to be the Dirichlet polynomial 
\[ R_{\chi}:= \sum_{m\in \mathcal{R}}\psi(m)\chi(m). \] 
 
We quote the following result from \cite[Lemma 3]{DM25} on the size of $\mathcal{R}$, which clearly depends on $N$.  
\begin{lemma}\label{Le1} 
With the notation as above and $1<a<1/\delta$, we have $|\mathcal{R}|\leq N$ for $N$ large enough.
\end{lemma}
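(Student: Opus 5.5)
The plan is a direct counting argument. Every $m \in \mathcal{R}$ is square-free and supported on primes of $\mathcal{P}=\bigcup_k \mathcal{P}_k$, so it is determined by the sets $m_k := \{p \in \mathcal{P}_k : p \mid m\}$ for $k=1,\ldots,K$, where $K:=[(\log_2 N)^{\delta}]$. Put
\[ A_k := \frac{a\log N}{k^{2}\log_3 N}. \]
Since $m \notin \mathcal{R}_k$, we have $|m_k| < A_k$ for every $k$. Hence
\[ |\mathcal{R}| \le \prod_{k=1}^{K} \sum_{0\le j< A_k} \binom{|\mathcal{P}_k|}{j}. \]
It therefore suffices to show that the logarithm of the right-hand side is at most $(a\delta+o(1))\log N$. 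Because $a\delta<1$, this gives $|\mathcal{R}|\le N$ for $N$ large.

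\textbf{Step 1: size of $\mathcal{P}_k$.} The primes in $\mathcal{P}_k$ lie in $(x_k, e x_k]$ with $x_k=e^k\log N\log_2N$. For $k\le K$ we have $\log x_k = \log_2 N + O((\log_2N)^{\delta})$. The Chebyshev bound then gives
\[ |\mathcal{P}_k| \ll \frac{e^{k}\log N\log_2 N}{\log x_k}\ll e^{k}\log N. \]
In particular $A_k\le |\mathcal{P}_k|$ for $N$ large, uniformly in $k\le K$.

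\textbf{Step 2: bound each factor.} For $j\le A_k\le|\mathcal{P}_k|$ we use $\binom{|\mathcal{P}_k|}{j}\le (e|\mathcal{P}_k|/j)^j$. The function $j\mapsto (e|\mathcal{P}_k|/j)^j$ is increasing for $j\le |\mathcal{P}_k|$, so each factor is at most
\[ (A_k+1)\,(e|\mathcal{P}_k|/A_k)^{A_k}. \]
When $A_k<1$ the factor is just $1$, since only the empty set is allowed. Taking logarithms and inserting Step 1,
\[ \log\Big(\frac{e|\mathcal{P}_k|}{A_k}\Big) \le k + 2\log k + \log_4 N + O(1), \]
with the $O(1)$ depending only on $a$. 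The extra factors contribute $\sum_k \log(A_k+1)\le K\log_2 N = O((\log_2N)^{1+\delta}) = o(\log N)$.

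\textbf{Step 3: sum over $k$.} Combining Step 2 over $k\le K$,
\[ \log|\mathcal{R}| \le \frac{a\log N}{\log_3 N}\sum_{k\le K}\frac{k+2\log k+\log_4N+O(1)}{k^{2}} + o(\log N). \]
The term $\sum_{k\le K}1/k = \log K + O(1) = \delta\log_3 N+O(1)$ is the main one. The remaining terms give $O(\log_4 N)$, since $\sum \log k/k^2$ and $\sum 1/k^2$ converge. Therefore
\[ \log|\mathcal{R}|\le a\log N\Big(\delta+O\Big(\frac{\log_4 N}{\log_3 N}\Big)\Big)+o(\log N) = (a\delta+o(1))\log N, \]
which is $<\log N$ for large $N$ because $a\delta<1$.

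\textbf{Main obstacle.} The only delicate point is the bookkeeping in Step 2. The logarithm $\log(e|\mathcal{P}_k|/A_k)$ must be computed precisely enough that the $\log N$ in $|\mathcal{P}_k|$ cancels against the $\log N$ in $A_k$. Only the growing terms $k$ and $\log_4 N$ may survive, and the $k$-term must be the one producing exactly $\delta\log_3N$ after weighting by $1/k^2$. The uniformity in $k\le K$ of the prime count in Step 1 (that $\log x_k\asymp\log_2N$) is what makes this cancellation work, so I would verify that estimate first.
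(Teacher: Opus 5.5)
Your argument is correct and is essentially the standard Bondarenko--Seip counting proof that the paper imports through \cite[Lemma 3]{DM25}: you bound $|\mathcal{R}|$ by $\prod_k\sum_{j<A_k}\binom{|\mathcal{P}_k|}{j}$, use $\binom{n}{j}\le (en/j)^j$ so that the $\log N$ factors cancel, and let $\sum_{k\le K}1/k=\delta\log_3 N+O(1)$ produce $(a\delta+o(1))\log N$. The only slip is that $A_k\le|\mathcal{P}_k|$ does not follow from the upper bound in Step 1, so you need one of two easy fixes: either invoke the prime number theorem for the lower bound $|\mathcal{P}_k|\gg e^k\log N$, or replace $|\mathcal{P}_k|$ by the larger integer $[Ce^{k}\log N]$ inside the binomial coefficient (which is monotone in the top entry), after which $A_k\le [Ce^k\log N]$ is trivial.
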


Now define 
\begin{align*}
\mathcal{A}_{N}:=\frac{1}{\displaystyle \sum_{m \in \mn } \psi(m)^{2} }\sum_{n\in \mn }\frac{\psi{(n)}}{\sqrt{n}}\sum_{\substack{l|n }}\psi(l)\sqrt{l}.
\end{align*}

   We further cite three results pertaining to $\mathcal{A}_{N}$ from \cite[ Lemmas 1--3]{BS18}.
\begin{lemma}\label{Le2}
	With the notation as above, we have as $N\to\infty$,
	\[\mathcal{A}_{N}\ge \exp\left((\delta + o(1))\sqrt{\frac{\log N\log_{3}N}{\log_{2}N}}\right).
	\]	
\end{lemma}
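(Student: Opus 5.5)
Since $\psi$ is multiplicative and supported on square-free integers, I would evaluate $\mathcal{A}_N$ as an Euler product and then compute the resulting prime sum with the prime number theorem.

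\emph{Euler product.} For square-free $n$, the inner sum $\sum_{l\mid n}\psi(l)\sqrt l$ is multiplicative in $n$. Summing over $n$ prime by prime gives
\[
\sum_{n}\frac{\psi(n)}{\sqrt n}\sum_{l\mid n}\psi(l)\sqrt l=\prod_{p\in\mathcal P}\Bigl(1+\psi(p)^2+\frac{\psi(p)}{\sqrt p}\Bigr),
\qquad
\sum_m\psi(m)^2=\prod_{p\in\mathcal P}\bigl(1+\psi(p)^2\bigr).
\]
Both products are finite because $\mathcal P$ is finite, and all terms are nonnegative. Hence
\[
\log\mathcal A_N=\sum_{p\in\mathcal P}\log\Bigl(1+\frac{\psi(p)p^{-1/2}}{1+\psi(p)^2}\Bigr).
\]

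\emph{Reduction to a prime sum.} Write $L=\log N\log_2N$. For $p\in\mathcal P$ we have $p>eL$, so $\log p-\log L>1$ and
\[
\psi(p)^2\le \frac{L}{p\log_3N}<\frac{1}{e\log_3N}=o(1).
\]
In particular $\psi(p)p^{-1/2}=o(1)$ uniformly. Using $\log(1+x)\ge x-x^2$ for small $x\ge0$, we get
\[
\log\mathcal A_N\ge(1+o(1))\sum_{p\in\mathcal P}\frac{\psi(p)}{\sqrt p}-\sum_{p\in\mathcal P}\frac{\psi(p)^2}{p}.
\]
The last sum is at most $\frac{1}{\log_3N}\sum_{p>eL}\frac{L}{p^2}\ll \frac{1}{\log L\,\log_3 N}=o(1)$, which is negligible.

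\emph{Main computation.} It remains to show
\[
\sum_{p\in\mathcal P}\frac{\psi(p)}{\sqrt p}=\sqrt{\frac{L}{\log_3N}}\sum_{eL<p\le e^{(\log_2N)^\delta}L}\frac{1}{p(\log p-\log L)}=(\delta+o(1))\sqrt{\frac{\log N\log_3N}{\log_2N}}.
\]
By the prime number theorem with error $O(x/\log^2x)$, the prime sum equals $(1+o(1))\int_{eL}^{e^{(\log_2N)^\delta}L}\frac{du}{u\log u\,(\log u-\log L)}$; the error contributes $O(1/\log L)$ after partial summation. Substituting $t=\log u-\log L$ turns the integral into
\[
\int_1^{(\log_2N)^\delta}\frac{dt}{t\,(t+\log L)}.
\]
Since $t\le(\log_2N)^\delta=o(\log L)$ (recall $\delta<1$) and $\log L\sim\log_2N$, this is $(1+o(1))\frac{\delta\log_3N}{\log_2N}$. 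Multiplying by $\sqrt{L/\log_3N}=\sqrt{\log N\log_2N/\log_3N}$ gives $(\delta+o(1))\sqrt{\log N\log_3N/\log_2N}$, and exponentiating yields the lemma.

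\emph{Main obstacle.} The only delicate point is checking that the PNT error terms and the second-order terms are $o$ of the main term, which has size $\asymp\log_3N/\log_2N$ before normalization. I expect this to follow since every error above is $O(1/\log_2N)$ or $o(1)$ relative to $\log_3N/\log_2N$.
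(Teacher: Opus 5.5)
Your proposal is correct and follows essentially the same route as the proof of Lemma~1 in \cite{BS18}; the paper does not prove this statement itself but cites that lemma. As there, you factor the ratio as $\prod_{p\in\mathcal{P}}\bigl(1+\psi(p)p^{-1/2}/(1+\psi(p)^2)\bigr)$, use $\psi(p)^2<1/(e\log_3 N)$ to bound $\log\mathcal{A}_N$ below by $(1+o(1))\sum_{p\in\mathcal{P}}\psi(p)p^{-1/2}-o(1)$, and evaluate that prime sum by the prime number theorem; your error bookkeeping is sound, and the PNT remainder is in fact $O(\log_3 N/(\log_2 N)^2)$ before normalization.
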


\begin{lemma}\label{Le3}
With the notation as above, we have as $N\to\infty$,
	\begin{align*}
	\frac{1}{\displaystyle\sum_{m\in \mn} \psi(m)^{2}}\sum_{\substack{n\in \mn \\  n\notin\mathcal{R}}}\frac{\psi{(n)}}{\sqrt{n}}\sum_{\substack{l|n }}\psi(l)\sqrt{l}=o(\mathcal{A}_{N}).
	\end{align*}
\end{lemma}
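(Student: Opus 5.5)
The plan is to bound the excluded sum multiplicatively with Rankin's trick, separately for each $k$, and then sum over $k$. Since $\psi$ is supported on square-free integers, the inner sum is $\sum_{l\mid n}\psi(l)\sqrt{l}$ and the outer sum runs over $n\in\operatorname{supp}(\psi)$. Any $n\in \operatorname{supp}(\psi)\setminus \mathcal{R}$ lies in some $\mathcal{R}_k$ by \eqref{Rdef}, so the left-hand side of Lemma \ref{Le3} is at most
\[ \sum_{k=1}^{[(\log_2 N)^{\delta}]} \frac{1}{\sum_m\psi(m)^2}\sum_{n\in\mathcal{R}_k}\frac{\psi(n)}{\sqrt n}\sum_{l\mid n}\psi(l)\sqrt l . \]
Write $\omega_k(n)$ for the number of prime divisors of $n$ in $\mathcal{P}_k$, and fix $b>1$. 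On $\mathcal{R}_k$ we use $1\le b^{\omega_k(n)-a\log N/(k^2\log_3 N)}$. After this insertion everything is multiplicative over primes. We also use
\[ \sum_m\psi(m)^2=\prod_p(1+\psi(p)^2) \quad\text{and}\quad \mathcal{A}_N=\prod_p\frac{1+\psi(p)^2+\psi(p)p^{-1/2}}{1+\psi(p)^2}. \]
Dividing by $\mathcal{A}_N$ therefore bounds the $k$-th term by
\[ b^{-\frac{a\log N}{k^2\log_3 N}}\prod_{p\in\mathcal{P}_k}\frac{1+b\psi(p)^2+b\psi(p)p^{-1/2}}{1+\psi(p)^2+\psi(p)p^{-1/2}} \le \exp\Big((b-1)\sum_{p\in\mathcal{P}_k}\big(\psi(p)^2+\psi(p)p^{-1/2}\big)-\frac{a\log b\,\log N}{k^2\log_3N}\Big). \]

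Next we compute the prime sums using \eqref{psidef} and the prime number theorem. Set $L=\log N\log_2N$. On $\mathcal{P}_k$ we have $\log p-\log L\in(k,k+1]$ and $\log p\sim\log_2 N$. This gives
\[ \sum_{p\in\mathcal{P}_k}\psi(p)^2 \le \frac{L}{k^2\log_3N}\sum_{e^kL<p\le e^{k+1}L}\frac1p \le (1+o(1))\frac{L}{k^2\log_3 N}\cdot\frac{1}{\log L} = (1+o(1))\frac{\log N}{k^2\log_3 N}. \]
The $o(1)$ is uniform in $k\le(\log_2N)^\delta$, because $\log p=\log L+O((\log_2N)^\delta)$ and $\delta<1$. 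Similarly,
\[ \sum_{p\in\mathcal{P}_k}\psi(p)p^{-1/2}\ll \sqrt{\frac{L}{\log_3N}}\cdot\frac{1}{k\log_2 N}, \]
which is $o\big(\log N/(k^2\log_3N)\big)$ uniformly in this range of $k$, since $k^2\le(\log_2 N)^{2\delta}$ is much smaller than $\sqrt{\log N}$.

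Hence the $k$-th term is at most
\[ \exp\Big(\frac{\log N}{k^2\log_3N}\big(b-1-a\log b+o(1)\big)\Big). \]
Choosing $b=a>1$ makes $b-1-a\log b=a-1-a\log a<0$. Because $k^2\le(\log_2N)^{2\delta}$, each term is then at most $\exp\big(-c\log N/((\log_2N)^{2\delta}\log_3N)\big)$ for some $c>0$. There are at most $(\log_2N)^\delta$ terms, so the total is $o(1)$ relative to $\mathcal{A}_N$, which is the claim of Lemma \ref{Le3}.

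The main obstacle is the uniform-in-$k$ evaluation of $\sum_{p\in\mathcal{P}_k}\psi(p)^2$. The negativity of the exponent relies on the sharp constant $1$ in that asymptotic, so the error terms must be controlled over the whole range $k\le(\log_2 N)^\delta$. This is exactly where the constraint $\delta<1$ is used. The requirement $a<1/\delta$ is not needed here; it only enters Lemma \ref{Le1}.
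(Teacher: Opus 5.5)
Your proposal is correct and follows the same route as the proof behind this statement. The paper gives no proof of its own; it cites Bondarenko--Seip, whose argument is this Rankin-trick computation: insert $b^{\omega_k(n)-a\log N/(k^2\log_3 N)}$ for each $k$, compare Euler products over $\mathcal{P}_k$, use $\sum_{p\in\mathcal{P}_k}\psi(p)^2\le(1+o(1))\log N/(k^2\log_3N)$, and take $b=a$.

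One correction to your closing remark: this lemma does not use $\delta<1$. The bound $\sum_{e^kL<p\le e^{k+1}L}p^{-1}\le(1+o(1))/\log(e^kL)\le(1+o(1))/\log L$ holds for every $k\ge1$. The estimate for $\sum_{p\in\mathcal{P}_k}\psi(p)p^{-1/2}$ and the final sum over $k$ are also $o(1)$ for any fixed $\delta>0$.
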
  

\begin{lemma}\label{Le5}
With the notation as above, for any $\varepsilon>0$, we have as $N\to\infty$, 
\[ \frac{1}{\displaystyle \sum_{m \in \mn} \psi(m)^{2}}\sum_{\substack{n\in \mathcal{R} }}\frac{\psi{(n)}}{\sqrt{n}}\sum_{\substack{l|n\\ l \le n/N^{\varepsilon}}}\psi(l)\sqrt{l}=o(\mathcal{A}_{N}), \]
where the implicit constant only depends on $\varepsilon$.
\end{lemma}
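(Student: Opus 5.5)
Since all terms are nonnegative, I would drop the restriction $n\in\mathcal{R}$ and bound the larger sum over all $n\in\mn$, using Rankin's trick. Because $\psi$ is supported on square-free integers, every divisor $l$ of $n$ gives a unique coprime factorization $n=lk$. Then
\[ \frac{\psi(n)}{\sqrt n}\psi(l)\sqrt l=\psi(l)^2\frac{\psi(k)}{\sqrt k}, \]
and the condition $l\le n/N^{\varepsilon}$ becomes $k\ge N^{\varepsilon}$. For a parameter $\alpha>0$ to be chosen, I would bound the indicator of $k\ge N^{\varepsilon}$ by $(k/N^{\varepsilon})^{\alpha}$. Everything is then multiplicative, and the normalized sum in the statement is at most
\[ N^{-\varepsilon\alpha}\prod_{p\in\mathcal{P}}\frac{1+\psi(p)^2+\psi(p)p^{\alpha-1/2}}{1+\psi(p)^2}. \]

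For comparison, I would use the matching product form of $\mathcal{A}_N$. Running the same factorization without the constraint on $k$ gives exactly
\[ \mathcal{A}_N=\prod_{p\in\mathcal{P}}\frac{1+\psi(p)^2+\psi(p)p^{-1/2}}{1+\psi(p)^2}. \]
Dividing the two products, each factor of the ratio is at most $1+\psi(p)p^{-1/2}(p^{\alpha}-1)$. Hence the ratio is bounded by
\[ \exp\Big(-\varepsilon\alpha\log N+\sum_{p\in\mathcal{P}}\psi(p)p^{-1/2}(p^{\alpha}-1)\Big). \]
It therefore suffices to choose $\alpha$ so that the prime sum is $o(\alpha\log N)$ and $\alpha\log N\to\infty$.

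I would take $\alpha=1/\log_2N$. Every $p\in\mathcal{P}$ satisfies $\log p\le(\log_2N)^{\delta}+\log(\log N\log_2N)\ll\log_2N$, so $\alpha\log p=O(1)$ and $p^{\alpha}-1\ll\alpha\log p$. Put $L=\log N\log_2N$. For $p\in\mathcal{P}_k$, \eqref{psidef} gives $\psi(p)p^{-1/2}\asymp\sqrt{L/\log_3N}\,(kp)^{-1}$, and $\log p\asymp \log L+k$. By Mertens, $\sum_{p\in\mathcal{P}_k}1/p\asymp 1/(\log L+k)$. The prime sum is therefore
\[ \ll \alpha\sqrt{\frac{L}{\log_3N}}\sum_{k\le(\log_2N)^{\delta}}\frac{1}{k}\ll\alpha\sqrt{\frac{\log N\log_2N}{\log_3N}}\,\log_3N. \]
This is $o(\alpha\log N)$ with plenty of room.

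Since $\alpha\log N=\log N/\log_2N\to\infty$, the ratio tends to $0$, which gives the claim; the implied rate depends only on $\varepsilon$. The main point needing care is the prime-sum estimate. In particular, $\alpha$ must be small enough that $p^{\alpha}$ stays bounded on the whole range of $\mathcal{P}$, which extends up to $e^{(\log_2N)^{\delta}}L$, while still keeping $N^{-\varepsilon\alpha}$ decaying. The choice $\alpha=1/\log_2N$ balances the two comfortably.
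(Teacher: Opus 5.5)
Your Rankin's-trick argument is correct: drop $n\in\mathcal{R}$ by positivity, insert $(n/(lN^{\varepsilon}))^{\alpha}$, compare the resulting Euler product with the exact product for $\mathcal{A}_N$, and take $\alpha=1/\log_2 N$; this bounds the ratio by $\exp\big(-\varepsilon\log N/\log_2N+O(\sqrt{\log N\log_3N/\log_2N})\big)\to0$. The paper gives no proof and simply quotes the lemma from Bondarenko--Seip (Lemma 3 there), whose argument, to the best of my recollection, is this same Rankin's-trick comparison of Euler products, so your route essentially coincides with the one the paper relies on.
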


\subsection{Proof of \eqref{cubicbound}}
Let $\Phi$ be the function described in Section \ref{smoothsum}.  We set
\begin{align*} 
 \mathcal{S}_{1}:=\sum_{\substack{(p,3)=1}} \ \sumstar_{\substack{\chi \bmod{p} \\ \chi^3 = \chi_0}}  (\log p)L(\tfrac{1}{2},\chi) |R_{\chi}|^2 \Phi \Big( \frac {p}{X} \Big) \quad \mbox{and} \quad \mathcal{S}_{2}:=\sum_{\substack{(p,3)=1}} \ \sumstar_{\substack{\chi \bmod{p} \\ \chi^3 = \chi_0}}(\log p)|R_{\chi}|^2\Phi \Big(\frac {p}{X} \Big).
\end{align*}
  As $|R^2_{\chi}| \geq 0$, we then observe that
\begin{align}  
\label{maxlower}
 \max_{\substack{X< p \le 2X \\\chi \bmod{p} \\ \chi^3 = \chi_0 }}\Big|L(\tfrac 12, \chi)\Big |\ge \frac{|\mathcal{S}_1|}{\mathcal{S}_2}. 
\end{align}
Thus It remains to estimate $|\mathcal{S}_{1}|$ from below and $\mathcal{S}_{1}$ from above.  Applying \eqref{approxfunc} and \eqref{approxfuncubic} leads to 
\begin{align}
\label{S1eval}
\begin{split}
	\mathcal{S}_{1}=& S_{1,1}+S_{1,2},   
\end{split}  
\end{align}
   where for some constant $A=A(X)>0$, 
\begin{equation} \label{S11def}
	\mathcal{S}_{1,1}= \sum_{m, n\in \mathcal{R}} \psi(m) \psi(n)\sum_{l\geq 1 } \frac{1}{\sqrt{l}}\sum_{\substack{(p,3)=1}} \ \sumstar_{\substack{\chi \shortmod{p} \\ \chi^3 = \chi_0}}  (\log p)\chi(lm)\overline{\chi}(n)V\left(\frac{l}{A\sqrt{p}} \right)\Phi \Big(\frac {p}{X}\Big), 
	\end{equation}
	and
	\begin{equation} \label{S12def}
	\mathcal{S}_{1,2}= \sum_{m, n\in \mathcal{R}} \psi(m) \psi(n)\sum_{l\geq 1 } \frac{1}{\sqrt{l}} \sum_{\substack{ \varpi \odd \\ \varpi \notin \mz }}\frac {(\log N(\varpi))\chi_{\varpi}(m)\overline\chi_{\varpi}(\omega(1-\omega)ln)g_3(\varpi)}{\sqrt{N(\varpi)}}V\left(\frac{lA}{\sqrt{N(\varpi)}} \right)\Phi\left(\frac{N(\varpi)}{X}\right) .   
\end{equation}   

We treat $\mathcal{S}_{1,1}$ first.  As $\chi$ is cubic, $\overline \chi(n)=\chi(n^2)$.  Applying Lemma \ref{PropDirpoly}  and partial summation, the inner double sum over $p$ and $\chi$ in $\mathcal{S}_{1,1}$ becomes
\begin{align*}
\begin{split}
\sum_{\substack{(p,3)=1}} \ \sumstar_{\substack{\chi \shortmod{p} \\ \chi^3 = \chi_0}} & (\log p)\chi(lmn^2)V\left(\frac{l}{A\sqrt{p}} \right)\Phi \Big(\frac {p}{X}\Big) \\
=& \int\limits^{2X}_X V\left(\frac{l}{A\sqrt{t}} \right) \dif \left( t\delta_{lmn^2=\text{cubic}}{\widehat \Phi}(1)+ O\left(t^{1/2+\varepsilon}\log
  (lmn^2+2) \right)  \right), \\
=& {\widehat \Phi}(1)X \delta_{lmn^2=\text{cubic}}\int\limits^{2}_1 V\left(\frac{l}{A\sqrt{Xt}} \right) \dif t \\
& \hspace*{1cm} +V\left(\frac{lA}{\sqrt{t}} \right)O\left(t^{1/2+\varepsilon}\log
  (lmn+2) \right)\Bigg|^{2X}_X -\int\limits^{2X}_X O\left(t^{1/2+\varepsilon}\log
 (lmn+2) \right)V'\left(\frac{l}{A\sqrt{t}}\right ) \frac {l}{2At^{3/2}} \dif t \\
=: & {\widehat \Phi}(1)X  \delta_{lmn^2=\text{cubic}}\int\limits^{2}_1 V\left(\frac{l}{A\sqrt{Xt}} \right) \dif t+R. 
\end{split}
\end{align*}

Hence,
\begin{align*}
    \mathcal{S}_{1,1}={\widehat \Phi}(1)X
     \sum_{m, n \in \mathcal{R}}&\psi(m)\psi(n)\sum_{\substack{lmn^2=\text{cubic}}} \frac{1}{\sqrt{l}}\int\limits_{1}^{2}V\left(\frac{l}{A\sqrt{Xt}} \right) dt+O\Bigg(\sum_{m, n\in \mathcal{R}} \psi(m) \psi(n)\sum_{l\geq 1 } \frac{R}{\sqrt{l}} \Bigg).
\end{align*}
Mindful of the rapid decay of $V$ and $V'$ given in \eqref{2.07}, we may restrict the sum over $l$ to be $l \leq (A\sqrt{X})^{1+\varepsilon}$ for any $\varepsilon>0$ in the $O$-term above.  It follows that
\begin{align*}	
    R \ll  X^{1/2+\varepsilon}.
\end{align*}    
   We deduce from the above, and Cauchy's inequality, that
\begin{align}	
\label{S1exp}
\begin{split}
    \mathcal{S}_{1,1}=& {\widehat \Phi}(1)X\sum_{m, n \in \mathcal{R}}\psi(m)\psi(n)\sum_{\substack{ lmn^2=\text{cubic}}} \frac{1}{\sqrt{l}}\int\limits_{1}^{2}V\left(\frac{l}{A\sqrt{Xt}} \right) \dif t  +O\Bigg(X^{1/2 +\varepsilon}\sum_{\substack{l \leq A^{1+\varepsilon}X^{1/2+\varepsilon}}}\frac{1}{\sqrt{l}}  \left(\sum_{m\in \mathcal{R}}\psi(m)\right)^2\Bigg) \\
    =& {\widehat \Phi}(1)X\sum_{m, n \in \mathcal{R}}\psi(m)\psi(n)\sum_{\substack{ lmn^2=\text{cubic}}} \frac{1}{\sqrt{l}}\int\limits_{1}^{2}V\left(\frac{l}{A\sqrt{Xt}} \right) \dif t+O\Bigg(X^{3/4 +\varepsilon}A^{1/2+\varepsilon}|\mathcal{R}|\sum_{m\in \mathcal{R}}\psi^2(m)\Bigg).
\end{split}
\end{align}
 
On to $\mathcal{S}_{1,2}$, we recast the inner sum over $\varpi$ as
\begin{align}
\label{S12def1}
\begin{split}
\sum_{\substack{ \varpi \odd}} & \frac {(\log N(\varpi))\chi_{\varpi}(m)\overline\chi_{\varpi}(\omega(1-\omega)ln)g_3(\varpi)}{\sqrt{N(\varpi)}}V\left(\frac{lA}{\sqrt{N(\varpi)}} \right)\Phi\left(\frac{N(\varpi)}{X}\right) \\
&+ O \left ( \Bigg| \sum_{\substack{ p \in \mz}}\frac {(\log N(p))\chi_{p}(m)\overline\chi_{p}(\omega(1-\omega)ln)g_3(p)}{\sqrt{N(p)}}V\left(\frac{lA}{\sqrt{N(p)}} \right)\Phi\left(\frac{N(p)}{X}\right) \Bigg| \right).   
\end{split}  
\end{align}    
Note that $N(p)=p^2$ for rational prime $p$.  From \eqref{2.1} and the compact support of $\Phi$, we get
\begin{align}
\label{S12errorest1}
\begin{split}
\sum_{\substack{ p \in \mz}}\frac {(\log N(p))\chi_{p}(m)\overline\chi_{p}(\omega(1-\omega)ln)g_3(p)}{\sqrt{N(p)}}V\left(\frac{lA}{\sqrt{N(p)}} \right)\Phi\left(\frac{N(p)}{X}\right) \ll \sum_{\substack{ p \ll \sqrt{X}}}\log p \ll X^{1/2}. 
\end{split}  
\end{align}   
From this, \eqref{S12def1} becomes
\begin{align}
\label{S12simplified}
\begin{split}
\sum_{\substack{ c \odd}} & \frac { \Lambda_{\mq(\omega)}(c)\chi_{c}(m)\overline\chi_{c}(\omega(1-\omega)ln)g_3(c)}{\sqrt{N(c)}}V\left(\frac{lA}{\sqrt{N(c)}} \right)\Phi\left(\frac{N(c)}{X}\right) \\
&+ O \Bigg( \Bigg| \sum_{\substack{ \varpi^j \\ \varpi \odd, j \geq 2}}\frac { \Lambda_{\mq(\omega)}(\varpi^j)\chi_{\varpi^j}(m)\overline\chi_{\varpi^j}(\omega(1-\omega)ln)g_3(\varpi^j)}{\sqrt{N(\varpi^j)}}V\left(\frac{lA}{\sqrt{N(\varpi^j)}} \right)\Phi\left(\frac{N(\varpi^j)}{X}\right) \Bigg| + X^{1/2} \Bigg). 
\end{split}  
\end{align}  
Arguing as in \eqref{S12errorest1}, we get, for any $\varepsilon >0$, 
\begin{align*}
\begin{split}
\sum_{\substack{ \varpi^j \\ \varpi \odd, j \geq 2}}\frac { \Lambda_{\mq(\omega)}(\varpi^j)\chi_{\varpi^j}(m)\overline\chi_{\varpi^j}(\omega(1-\omega)ln)g_3(\varpi^j)}{\sqrt{N(\varpi^j)}}V\left(\frac{lA}{\sqrt{N(\varpi^j)}} \right)\Phi\left(\frac{N(\varpi^j)}{X}\right) \ll X^{1/2+\varepsilon}.
\end{split}  
\end{align*}   
  We deduce from the above and \eqref{S12simplified} that \eqref{S12def1} is
\begin{align}
\label{S12simplified1}
\begin{split}
 \sum_{\substack{ c \odd}} & \frac { \Lambda_{\mq(\omega)}(c)\chi_{c}(m)\overline\chi_{c}(\omega(1-\omega)ln)g_3(c)}{\sqrt{N(c)}}V\left(\frac{lA}{\sqrt{N(c)}} \right)\Phi\left(\frac{N(c)}{X}\right) + O \left (X^{1/2+\varepsilon} \right ) \\
=& \sum_{\substack{ c \odd \\ (c, lmn)=1}}\frac { \Lambda_{\mq(\omega)}(c)\overline\chi_{c}(\omega(1-\omega))g_3(lnm^2, c)}{\sqrt{N(c)}}V\left(\frac{lA}{\sqrt{N(c)}} \right)\Phi\left(\frac{N(c)}{X}\right) + O \left (X^{1/2+\varepsilon} \right ),
\end{split}  
\end{align}
where we used the observation that $\chi(m)=\overline \chi(m^2)$ as $\chi$ is cubic and \eqref{eq:gmult}. \newline

  Moreover, due to the rapid decay of $V$ given in \eqref{2.07}, we may restrict the sum over $l$ in $S_{1,2}$ to $l \leq (\sqrt{X}/A)^{1+\varepsilon}$. It follows from this, \eqref{S12def}, \eqref{S12simplified1} and Cauchy's inequality that 
\begin{align}
\label{S12}
\begin{split}
	\mathcal{S}_{1,2}=&\sum_{m, n\in \mathcal{R}} \psi(m) \psi(n)\sum_{l\geq 1 } \frac{1}{\sqrt{l}}\sum_{\substack{ c \odd\\ (c, lmn)=1}}\frac { \Lambda_{\mq(\omega)}(c)\overline\chi_{c}(\omega(1-\omega))g_3(lnm^2, c)}{\sqrt{N(c)}}V\left(\frac{lA}{\sqrt{N(c)}} \right)\Phi\left(\frac{N(c)}{X}\right) \\
& \hspace*{2cm} +O\left(X^{1/2+\varepsilon} \sum_{m, n\in \mathcal{R}} \psi(m) \psi(n)\sum_{l \leq (\sqrt{X}/A)^{1+\varepsilon} } \frac{1}{\sqrt{l}}\right) \\
=&\sum_{m, n\in \mathcal{R}} \psi(m) \psi(n)\sum_{l\geq 1 } \frac{1}{\sqrt{l}}\sum_{\substack{ c \odd\\ (c, lmn)=1}}\frac { \Lambda_{\mq(\omega)}(c)\overline\chi_{c}(\omega(1-\omega))g_3(lnm^2, c)}{\sqrt{N(c)}}V\left(\frac{lA}{\sqrt{N(c)}} \right)\Phi\left(\frac{N(c)}{X}\right) \\
& \hspace*{2cm} +O\left(X^{3/4+\varepsilon}A^{-1/2+\varepsilon}|\mathcal{R}| \sum_{m\in \mathcal{R}} \psi^2(m)\right).
\end{split}
\end{align}
 
   Note that $\overline\chi_{c}(\omega(1-\omega))=\chi_{c}((\omega(1-\omega))^2)$ and the Hecke character $\psi_{(\omega(1-\omega))^2}$ is a ray class character modulo $9$. This allows us to apply Proposition \ref{lemg3} and get
\begin{align*}
\begin{split}
	\sum_{\substack{ c \odd\\ (c, lmn)=1}} & \frac { \Lambda_{\mq(\omega)}(c)\overline\chi_{c}(\omega(1-\omega))g_3(lnm^2, c)}{\sqrt{N(c)}}V\left(\frac{lA}{\sqrt{N(c)}} \right)\Phi\left(\frac{N(c)}{X}\right) \ll  X^{\varepsilon}\int\limits^{2X}_XV\left(\frac{lA}{\sqrt{t}} \right)\Phi\left(\frac{t}{X}\right)\dif (O(t^{4/5}N(lnm^2)^{1/10})) \\
\ll & X^{\varepsilon}O(t^{4/5}N(lnm^2)^{1/10})V\left(\frac{lA}{\sqrt{t}} \right)\Phi\left(\frac{t}{X}\right)\Bigg|^{2X}_X \\
& \hspace*{2cm} -\int\limits^{2X}_XO(t^{4/5}N(lnm^2)^{1/10}) \Bigg(V'\left(\frac{lA}{\sqrt{t}} \right)\Phi\left(\frac{t}{X}\right) \left(\frac {-lA}{2t^{3/2}}\right)+V\left(\frac{lA}{\sqrt{t}} \right)\Phi'\left(\frac{t}{X}\right)\frac 1{X} \Bigg) \dif t. 
\end{split}  
\end{align*}    
  By the rapid decay of $V'(x)$ given in \eqref{2.07}, we may assume that $l \leq (\sqrt{t}/A)^{1+\varepsilon}$ for each fixed $t$ in the integral above. It follows from this and the above that
\begin{align*}
\begin{split}
\sum_{\substack{ c \odd\\ (c, lmn)=1}}\frac { \Lambda_{\mq(\omega)}(c)\overline\chi_{c}(\omega(1-\omega))g_3(lnm^2, c)}{\sqrt{N(c)}}V\left(\frac{lA}{\sqrt{N(c)}} \right)\Phi\left(\frac{N(c)}{X}\right) \ll  X^{4/5+\varepsilon}(lnm^2)^{1/5+\varepsilon}. 
\end{split}  
\end{align*}   
Inserting the above into \eqref{S12} and restricting the sum over $l$ to $l \leq (\sqrt{X}/A)^{1+\varepsilon}$, we arrive at
\begin{align}
\label{S12bound}
\begin{split}
	\mathcal{S}_{1,2} \ll & \sum_{m, n\in \mathcal{R}} \psi(m) \psi(n)\sum_{l\leq (\sqrt{X}/A)^{1+\varepsilon}} \frac{X^{4/5+\varepsilon}}{\sqrt{l}}(lnm^2)^{1/5+\varepsilon}+O\left(X^{3/4+\varepsilon}A^{-1/2+\varepsilon}|\mathcal{R}| \sum_{m\in \mathcal{R}} \psi^2(m)\right) \\
\ll & X^{23/20+\varepsilon}A^{-7/10+\varepsilon} \sum_{m, n\in \mathcal{R}} \psi(m) \psi(n)(nm^2)^{1/5+\varepsilon}+O\left(X^{3/4+\varepsilon}A^{-1/2+\varepsilon}|\mathcal{R}| \sum_{m\in \mathcal{R}} \psi^2(m)\right).
\end{split}  
\end{align}      
 
   We now deduce from \eqref{S1eval}, \eqref{S12}, \eqref{S12bound} and Lemma~\ref{Le1},
\begin{align*}	
\begin{split}
    |\mathcal{S}_{1}| \geq {\widehat \Phi}(1)X & \sum_{m, n \in \mathcal{R}}\psi(m)\psi(n)\sum_{\substack{ lmn^2=\text{cubic}}} \frac{1}{\sqrt{l}}\int\limits_{1}^{2}V\left(\frac{l}{A\sqrt{Xt}} \right) \dif t \\
    & +O\Bigg(X^{3/4 +\varepsilon}A^{1/2+\varepsilon}N\sum_{m\in \mathcal{R}}\psi^2(m)+X^{23/20+\varepsilon}A^{-7/10+\varepsilon} \sum_{m, n\in \mathcal{R}} \psi(m) \psi(n)(nm^2)^{1/5+\varepsilon}\Bigg).
\end{split}
\end{align*} 
   
As both $\psi$ and $V$ are both non-negative, we may further minorize the leading term in \eqref{S1exp} by keeping only the terms with $lm = n$ and discarding the rest.  Thus we obtain
\begin{align}
\label{S1remainder} 
\begin{split}
|\mathcal{S}_{1}| 
	 \ge  {\widehat \Phi}(1)X\sum_{n \in\mathcal{R}} \frac{\psi(n)}{\sqrt{n}} & \sum_{\substack{m \mid n }} \psi(m)\sqrt{m}\int\limits_{1}^{2}V\left(\frac{n}{mA\sqrt{Xt}} \right) \dif t \\
& +O\Bigg(X^{3/4 +\varepsilon}A^{1/2+\varepsilon}N\sum_{m\in \mathcal{R}}\psi^2(m)+X^{23/20+\varepsilon}A^{-7/10+\varepsilon} \sum_{m, n\in \mathcal{R}} \psi(m) \psi(n)(nm^2)^{1/5+\varepsilon}\Bigg).  
\end{split}    
\end{align}
  
 As $N \ll X$, it follows that if $m\ge n/N^{\varepsilon}$ and $A \geq 1$, $0< n/(mA\sqrt{Xt}) <1$ for $1 \leq t \leq 2$.  Applying the estimate $V(x)=1 + O\left(x^{1/2-\varepsilon}\right)$ in \eqref{2.07}, we to deduce from \eqref{S1remainder} that
\begin{align}
\label{S1lower}
\begin{split}
|\mathcal{S}_{1}| 
	 \ge {\widehat \Phi}(1)X  \sum_{n \in\mathcal{R}} \frac{\psi(n)}{\sqrt{n}} & \sum_{\substack{m\mid n\\ m\ge n/N^{\varepsilon}}} \psi(m)\sqrt{m} +O\Bigg(X^{3/4+\varepsilon}A^{-1/2+\varepsilon}\sum_{n \in\mathcal{R}} \psi(n)\sum_{\substack{m\mid n\\ m\ge n/N^{\varepsilon}}} \psi(m) \Bigg) \\ 
&  +O\Bigg(X^{3/4 +\varepsilon}A^{1/2+\varepsilon}N\sum_{m\in \mathcal{R}}\psi^2(m)+X^{23/20+\varepsilon}A^{-7/10+\varepsilon} \sum_{m, n\in \mathcal{R}} \psi(m) \psi(n)(nm^2)^{1/5+\varepsilon}\Bigg).  
\end{split}
\end{align}

   We observe from \eqref{psidef} that we have $0< \psi(p) \leq 1$ for $p \in  \mathcal{P}$. It follows from this that
\begin{align*}
\sum_{n \in\mathcal{R}} \psi(n)\sum_{\substack{m\mid n\\ m\ge n/N^{\varepsilon}}} \psi(m)  \ll \sum_{n \in\mathcal{R}} \psi(n)\sum_{\substack{ m\mid n}} \psi(m)=\sum_{m \in\mathcal{R}} \psi(m)\sum_{\substack{n \in\mathcal{R} \\ m\mid n}}\psi(n) \ll |\mathcal{R}|\sum_{m \in\mathcal{R}} \psi^2(m). 
\end{align*}  

  As  $|\mathcal{R}|\leq N$ from Lemma \ref{Le1},  we deduce from the above and \eqref{S1lower} that
\begin{align}
\label{S1lower1}
\begin{split}
|\mathcal{S}_{1}| 
	 \ge & {\widehat \Phi}(1)X\sum_{n \in\mathcal{R}} \frac{\psi(n)}{\sqrt{n}} \sum_{\substack{m\mid n\\ m\ge n/N^{\varepsilon}}} \psi(m)\sqrt{m} \\
&  +O\Bigg(X^{3/4 +\varepsilon}A^{1/2+\varepsilon}N\sum_{m\in \mathcal{R}}\psi^2(m)+X^{23/20+\varepsilon}A^{-7/10+\varepsilon} \sum_{m, n\in \mathcal{R}} \psi(m) \psi(n)(nm^2)^{1/5+\varepsilon}\Bigg). 
\end{split}
\end{align}

  Note that we have by \cite[(1.26)]{MVa1} that
\begin{align}
\label{sumk}
\begin{split}
	\sum^{[(\log_{2}N)^{\delta}]}_{k=1}\frac 1k=\log [(\log_{2}N)^{\delta}]+\gamma_0+O([(\log_{2}N)^{\delta}]^{-1}), 
\end{split}  
\end{align}     
   where $\gamma_0$ is the Euler constant.  We now deduce, from \eqref{sumk} and the definition of $\mathcal R$ given in \eqref{Rdef} that for any $n \in \mathcal R$,
\begin{align*}
\begin{split}
	n \ll \prod_{1 \leq k \leq [(\log_{2}N)^{\delta}]}(e^{k+1}\log N \log_2 N)^{a\log N/(k^2\log_3N)} \ll N^{a\delta+\varepsilon}e^{a\pi^2\log N \log_2 N/(6\log_3N)},
\end{split}  
\end{align*}   
  where the last estimation above follows by noting that $\sum^{\infty}_{k=1}1/k^2=\zeta(2)=\pi^2/6$. As $a\delta<1$, we see from the above that for any $n \in \mathcal R$, we have
\begin{align*}
\begin{split}
	n \ll N^{a\pi^2\log_2 N/(6\log_3N)+\varepsilon}.
\end{split}  
\end{align*}   
  
Using  the above, together with Cauchy's inequality, we infer
\begin{align}
\label{Sumpsitwist}
\begin{split}
\sum_{m, n\in \mathcal{R}} \psi(m) \psi(n)(nm^2)^{1/5+\varepsilon} \ll & N^{a\pi^2\log_2 N/(10\log_3N)+\varepsilon}|\mathcal R|\sum_{m \in \mathcal{R}} \psi(m)^2 \\
\ll & N^{a\pi^2\log_2 N/(10\log_3N)+\varepsilon}\sum_{m \in \mathcal{R}} \psi(m)^2,
\end{split}
\end{align}
again using the bound $|\mathcal{R}|\leq N$, furnished by Lemma \ref{Le1}. \newline
   
    It follows from \eqref{Sumpsitwist} that 
\begin{align}
\label{S1lowererror}
\begin{split}
 X^{3/4 +\varepsilon}A^{1/2+\varepsilon} & N\sum_{m\in \mathcal{R}}\psi^2(m)+X^{23/20+\varepsilon}A^{-7/10+\varepsilon} \sum_{m, n\in \mathcal{R}} \psi(m) \psi(n)(nm^2)^{1/5+\varepsilon} \\
 \ll &  \Big (X^{3/4 +\varepsilon}A^{1/2+\varepsilon}N+X^{23/20+\varepsilon}A^{-7/10+\varepsilon}N^{a\pi^2\log_2 N/(10\log_3N)+\varepsilon}\Big )\sum_{m \in \mathcal{R}} \psi(m)^2. 
\end{split}
\end{align}
The optimal choice of $A$ is
\begin{align*}
\begin{split}
 A=X^{1/3}N^{a\pi^2\log_2 N/(12\log_3N)}. 
\end{split}
\end{align*}
and renders that the right-hand side of \eqref{S1lowererror} is
\begin{align}
\label{S1lowererror1}
 \ll   X^{11/12 +\varepsilon}N^{a\pi^2\log_2 N/(24\log_3N)+\varepsilon}\sum_{m \in \mathcal{R}} \psi(m)^2. 
\end{align}

   We now apply \eqref{S1lower1}, \eqref{S1lowererror1} together with Lemmas \ref{Le2}--\ref{Le5} to see that
\begin{align}
\label{S1lower2}
\begin{split}
	 |\mathcal{S}_{1}| \ge
	\left(1+o(1)\right){\widehat \Phi}(1) & X\exp\left((\delta+ o(1))\sqrt{\frac{\log N\log_{3}N}{\log_{2}N}}\right)\left(\sum_{\substack{m\in \mathcal{R}}}\psi(m)^{2}\right) \\
& +O\left(X^{11/12 +\varepsilon}N^{a\pi^2\log_2 N/(24\log_3N)+\varepsilon}\sum_{m \in \mathcal{R}} \psi(m)^2\right).
\end{split}
\end{align}
   We now set the value of $N$ to be such that
\begin{align}
\label{Nvalue}
\begin{split}
	N^{a\pi^2\log_2 N/(24\log_3N)}=X^{1/12-5\varepsilon}. 
\end{split}
\end{align}   
  Recall that we have $1<a < 1/\delta$ so that the above choice of $N$ implies that
\begin{align}
\label{logNvalue}
\begin{split}
	\frac {\log N \log_2 N}{\log_3N}=  \frac {24}{a\pi^2}\Big(\frac 1{12}-5\varepsilon \Big)\log X >\frac {24\delta}{\pi^2}\Big(\frac 1{12}-5\varepsilon\Big)\log X. 
\end{split}
\end{align}  

  We deduce from \eqref{S1lower2}--\eqref{logNvalue} that 
\begin{align}
\label{S1lower3}
\begin{split}
	 |\mathcal{S}_{1}| \ge& \left(1+o(1)\right){\widehat \Phi}(1)X \exp\left(\left(\delta \sqrt{\frac {24\delta}{\pi^2}\Big(\frac 1{12}-5\varepsilon\Big)}+o(1)\right)\frac {\log_{3} X}{\log_{2} X}\sqrt{\log X}\right) \left(\sum_{\substack{m\in \mathcal{R}}}\psi(m)^{2}\right).
\end{split}
\end{align}

Next, we proceed to establish an upper bound of $\mathcal{S}_2$. Again, we apply Lemma \ref{PropDirpoly} and use arguments similar to those leading to \eqref{S1remainder}.  Hence,
\begin{align*}
	\mathcal{S}_{2}=& \sum_{\substack{(p,3)=1}} \ \sumstar_{\substack{\chi \shortmod{p} \\ \chi^3 = \chi_0}}(\log p)|R_{\chi}|^2\Phi \Big(\frac {p}{X} \Big)
= \sum_{m, n \in \mathcal{R}}\psi(m)\psi(n)  \sum_{\substack{(p,3)=1}} \ \sumstar_{\substack{\chi \shortmod{p} \\ \chi^3 = \chi_0}}(\log p)\chi(mn^2)\Phi \Big(\frac {p}{X} \Big) \\
	            =& {\widehat \Phi}(1)X \sum_{\substack{m, n \in \mathcal{R} \\ m n^2 =\text{cubic}}}\psi(m) \psi(n)
	             +O\Bigg(X^{1/2 + \varepsilon} \sum_{\substack{m,n \in \mathcal{R}}}\psi(m) \psi(n)\log
  (mn^2+2)\Bigg)\\
	            =& {\widehat \Phi}(1)X	\sum_{\substack{n \in \mathcal{R}}}\psi(n)^{2} + O\left(X^{1/2 +\varepsilon} |\mathcal{R}| \sum_{\substack{m\in  \mathcal{R}}} \psi(m)^{2} \right),
\end{align*}
 where the last equality above follows by noting that $mn^2$ is a cube only if $m=n$ since $m$ and $n$ are both square-free.\newline

Again, using $|\mathcal{R}|\leq N= X^{o(1)}$, we deduce from the above that 
\begin{align}    
\label{Sbound}        
  \mathcal{S}_{2} \le \left(1+o(1)\right){\widehat \Phi}(1)X
	            \left(\sum_{\substack{m\in \mathcal{R}}}\psi(m)^{2}\right).
\end{align}
Hence, for sufficiently large $X$ and arbitrary small $\varepsilon>0$, we deduce from \eqref{maxlower}, \eqref{S1lower3} and \eqref{Sbound} that 
 \begin{align*}
\max_{\substack{X< p \le 2X \\\chi \shortmod{p} \\ \chi^3 = \chi_0 }}\Big|L(\tfrac 12, \chi)\Big | &\geq \frac{|\mathcal{S}_1|}{\mathcal{S}_2}\geq \exp\left(\left(\delta \sqrt{\frac {24\delta}{\pi^2}\Big(\frac 1{12}-5\varepsilon \Big)}+o(1)\right)\frac {\log_{3} X}{\log_{2} X}\sqrt{\log X}\right).
\end{align*}
With $\delta \rightarrow 1^-$, we obtain the desired estimation for the first expression given in  \eqref{cubicbound}. 

\subsection{Proof of \eqref{quarticbound}}

As the proof is similar to that given in the previous section, we shall be brief here.  Keeping the notation in the previous section, set
\begin{align*} 
 \mathcal{T}_{1}:=\sum_{\substack{(p,2)=1}} \ \sumstar_{\substack{\chi \shortmod{p} \\ \chi^4 = \chi_0}}  (\log p)L(\tfrac{1}{2},\chi) |R_{\chi}|^2 \Phi \Big(\frac {p}{X}\Big) \quad \mbox{and} \quad \mathcal{T}_{2}:=\sum_{\substack{(p,2)=1}} \ \sumstar_{\substack{\chi \shortmod{p} \\ \chi^4 = \chi_0}}(\log p)|R_{\chi}|^2\Phi \Big(\frac {p}{X}\Big).
\end{align*}
  Then we have 
\begin{align}  
\label{maxlowerquartic}
 \max_{\substack{X< p \le 2X \\\chi \shortmod{p} \\ \chi^4 = \chi_0 }}\Big|L(\tfrac 12, \chi)\Big |\ge \frac{|\mathcal{T}_1|}{\mathcal{T}_2}. 
\end{align}
We first apply \eqref{approxfuncquartic}, \eqref{approxfuncquarticprime} and the expression $(1\pm \chi(-1))/2$ to detect those characters satisfying $\chi(-1)=\pm 1$  to see that 
\begin{align}
\label{S1evalquartic}
\begin{split}
	\mathcal{T}_{1}=& T_{1,1, +}+T_{1,1, -}+T_{1,2, +}+T_{1,2, -},   
\end{split}  
\end{align}
   where for some constant $B=B(X)>0$, 
\begin{align*}
\begin{split}
	\mathcal{T}_{1,1, \pm}=& \sum_{m, n\in \mathcal{R}} \psi(m) \psi(n)\sum_{l\geq 1 } \frac{1}{\sqrt{l}}\sum_{\substack{(p,2)=1}} \ \sumstar_{\substack{\chi \shortmod{p} \\ \chi^4 = \chi_0}}  (\log p)\frac {1\pm \chi(-1)}{2}\chi(lm)\overline{\chi}(n)V_{\pm}\left(\frac{l}{B\sqrt{p}} \right)\Phi \Big(\frac {p}{X}\Big), \quad \mbox{and} \\
	\mathcal{T}_{1,2, \pm}=& \pm \sum_{m, n\in \mathcal{R}} \psi(m) \psi(n)\sum_{l\geq 1 } \frac{1}{\sqrt{l}} \\
& \hspace*{1cm} \times \sum_{\substack{ \varpi \odd \\ \varpi \notin \mz }}\frac {1\pm \chi_{\varpi}(-1)}{2} \cdot \frac {(\log N(\varpi))\chi_{\varpi}(m)\overline\chi_{\varpi}((-2i)^3ln)g_4(\varpi)}{\sqrt{N(\varpi)}}V_{\pm}\left(\frac{lB}{\sqrt{N(\varpi)}} \right)\Phi\left(\frac{N(\varpi)}{X}\right) .   
\end{split}  
\end{align*}   

Similar to \eqref{S1exp},
\begin{align}	
\label{S1expquartic}
\begin{split}
 \mathcal{T}_{1,1, +}+\mathcal{T}_{1,1, -}  = \frac 12{\widehat \Phi}(1)X \sum_{m, n \in \mathcal{R}}\psi(m)\psi(n) & \sum_{\substack{ lmn^3=\text{quartic}}} \frac{1}{\sqrt{l}}\int\limits_{1}^{2}\left(V_{+}\left(\frac{l}{B\sqrt{Xt}} \right)+V_{-}\left(\frac{l}{B\sqrt{Xt}} \right)\right) \dif t \\
 & +O\Bigg(X^{3/4 +\varepsilon}B^{1/2+\varepsilon}|\mathcal{R}|\sum_{m\in \mathcal{R}}\psi^2(m)\Bigg).
\end{split}
\end{align}
We apply Proposition \ref{lemg3}, analogous to \eqref{S12bound}, getting
\begin{align}
\label{S12boundquartic}
\begin{split}
T_{1,2, +}+T_{1,2, -} \ll & \sum_{m, n\in \mathcal{R}} \psi(m) \psi(n)\sum_{l\leq (\sqrt{X}/B)^{1+\varepsilon}} \frac{X^{7/8+\varepsilon}}{\sqrt{l}}(lnm^3)^{1/8+\varepsilon}+O\left(X^{3/4+\varepsilon}B^{-1/2+\varepsilon}|\mathcal{R}| \sum_{m\in \mathcal{R}} \psi^2(m)\right) \\
\ll & X^{19/16+\varepsilon}B^{-5/8+\varepsilon} \sum_{m, n\in \mathcal{R}} \psi(m) \psi(n)(nm^3)^{1/8+\varepsilon}+O\left(X^{3/4+\varepsilon}B^{-1/2+\varepsilon}|\mathcal{R}| \sum_{m\in \mathcal{R}} \psi^2(m)\right).
\end{split}  
\end{align}      
 
From \eqref{S1evalquartic}--\eqref{S12boundquartic} and Lemma \ref{Le1}, we decude that
\begin{align}	
\label{S1exp1quartic}
\begin{split}
    |\mathcal{T}_{1}| \geq  \frac 12{\widehat \Phi}(1)X & \sum_{m, n \in \mathcal{R}}\psi(m)\psi(n)\sum_{\substack{ lmn^3=\text{quartic}}} \frac{1}{\sqrt{l}}\int\limits_{1}^{2}\left(V_{+}\left(\frac{l}{B\sqrt{Xt}} \right)+V_{-}\left(\frac{l}{B\sqrt{Xt}} \right)\right) \dif t  \\
    & +O\Bigg(X^{3/4 +\varepsilon}B^{1/2+\varepsilon}N\sum_{m\in \mathcal{R}}\psi^2(m)+X^{19/16+\varepsilon}B^{-5/8+\varepsilon} \sum_{m, n\in \mathcal{R}} \psi(m) \psi(n)(nm^3)^{1/8+\varepsilon}\Bigg).  
\end{split}
\end{align} 
  
The positivity of $\psi$ and $V$ again allows us to keep only the terms with $lm=n$ in the main term of \eqref{S1exp1quartic}.  Hence, similar to \eqref{S1lower1}, 
\begin{align}
\label{S1lower1quartic}
\begin{split}
|\mathcal{T}_{1}| 
	 \ge {\widehat \Phi}(1)X & \sum_{n \in\mathcal{R}} \frac{\psi(n)}{\sqrt{n}} \sum_{\substack{m\mid n\\ m\ge n/N^{\varepsilon}}} \psi(m)\sqrt{m} \\
&  +O\Bigg(X^{3/4 +\varepsilon}B^{1/2+\varepsilon}N\sum_{m\in \mathcal{R}}\psi^2(m)+X^{19/16+\varepsilon}B^{-5/8+\varepsilon}  \sum_{m, n\in \mathcal{R}} \psi(m) \psi(n)(nm^3)^{1/8+\varepsilon}\Bigg). 
\end{split}
\end{align}

Similar to \eqref{Sumpsitwist},
\begin{align}
\label{Sumpsitwistquartic}
\begin{split}
	 \sum_{m, n\in \mathcal{R}} \psi(m) \psi(n)(nm^3)^{1/8+\varepsilon} \ll & N^{a\pi^2\log_2 N/(12\log_3N)+\varepsilon}\sum_{m \in \mathcal{R}} \psi(m)^2.
\end{split}
\end{align}   
  
    It follows from \eqref{Sumpsitwistquartic} that 
\begin{align}
\label{S1lowererrorquartic}
\begin{split}
 X^{3/4 +\varepsilon}B^{1/2+\varepsilon} & N\sum_{m\in \mathcal{R}}\psi^2(m)+X^{19/16+\varepsilon}B^{-5/8+\varepsilon}\sum_{m, n\in \mathcal{R}} \psi(m) \psi(n)(nm^3)^{1/8+\varepsilon} \\
 \ll &  \Big (X^{3/4 +\varepsilon}B^{1/2+\varepsilon}N+X^{19/16+\varepsilon}B^{-5/8+\varepsilon} N^{a\pi^2\log_2 N/(12\log_3N)+\varepsilon}\Big )\sum_{m \in \mathcal{R}} \psi(m)^2. 
\end{split}
\end{align}
  We now optimize our choice of $B$ as
\begin{align*}
\begin{split}
 B=X^{7/18}N^{2a\pi^2\log_2 N/(27\log_3N)}. 
\end{split}
\end{align*}
When inserted into \eqref{S1lowererrorquartic}, this yields 
\begin{align}
\label{S1lowererror1quartic}
\begin{split}
 X^{3/4 +\varepsilon}A^{1/2+\varepsilon}N  \sum_{m\in \mathcal{R}}\psi^2(m)  +X^{19/16+\varepsilon}B^{-5/8+\varepsilon} & \sum_{m, n\in \mathcal{R}} \psi(m) \psi(n)(nm^3)^{1/8+\varepsilon} \\
 \ll &  X^{17/18 +\varepsilon}N^{a\pi^2\log_2 N/(27\log_3N)+\varepsilon}\sum_{m \in \mathcal{R}} \psi(m)^2. 
\end{split}
\end{align}

   We now apply \eqref{S1lower1quartic}, \eqref{S1lowererror1quartic} together with Lemmas \ref{Le2}--\ref{Le5} to see that
\begin{align}
\label{S1lower2quartic}
\begin{split}
	 |\mathcal{T}_{1}| \ge  \left(1+o(1)\right){\widehat \Phi}(1) X \exp & \left((\delta+ o(1))\sqrt{\frac{\log N\log_{3}N}{\log_{2}N}}\right)\left(\sum_{\substack{m\in \mathcal{R}}}\psi(m)^{2}\right) \\
& +O\left(X^{17/18 +\varepsilon}N^{a\pi^2\log_2 N/(27\log_3N)+\varepsilon}\sum_{m \in \mathcal{R}} \psi(m)^2\right).
\end{split}
\end{align}
We now choose the value of $N$ satisfying
\begin{align}
\label{Nvaluequartic}
\begin{split}
	N^{a\pi^2\log_2 N/(27\log_3N)}=X^{1/18-5\varepsilon}. 
\end{split}
\end{align}   
  Recall that we have $1<a < 1/\delta$ so that the above implies that
\begin{align}
\label{logNvaluequartic}
\begin{split}
	\frac {\log N \log_2 N}{\log_3N}=  \frac {27}{a\pi^2}\Big (\frac 1{18}-5\varepsilon \Big)\log X >\frac {27\delta}{\pi^2}\Big(\frac 1{18}-5\varepsilon\Big)\log X. 
\end{split}
\end{align}  

  We deduce from \eqref{S1lower2quartic}--\eqref{logNvaluequartic} that 
\begin{align}
\label{S1lower3quartic}
\begin{split}
	 |\mathcal{T}_{1}| 
		 \ge  &\left(1+o(1)\right){\widehat \Phi}(1)X \exp\left(\left(\delta \sqrt{\frac {27\delta}{\pi^2}\Big(\frac 1{18}-5\varepsilon\Big)}+o(1)\right)\frac {\log_{3} X}{\log_{2} X}\sqrt{\log X}\right) \left(\sum_{\substack{m\in \mathcal{R}}}\psi(m)^{2}\right).
\end{split}
\end{align}

Next, similar to \eqref{Sbound}, we have
\begin{align*}            
  \mathcal{T}_{2} \le \left(1+o(1)\right){\widehat \Phi}(1)X
	            \left(\sum_{\substack{m\in \mathcal{R}}}\psi(m)^{2}\right).
\end{align*}
Hence, for sufficiently large $X$ and arbitrary small $\varepsilon>0$, \eqref{maxlowerquartic}, \eqref{S1lower3quartic} and the above render
 \begin{align*}
\max_{\substack{X< p \le 2X \\\chi \shortmod{p} \\ \chi^4 = \chi_0 }}\Big|L(\tfrac 12, \chi)\Big | &\geq \frac{|\mathcal{T}_1|}{\mathcal{T}_2}\geq \exp\left(\left(\delta \sqrt{\frac {27\delta}{\pi^2}\Big(\frac 1{18}-5\varepsilon\Big)}+o(1)\right)\frac {\log_{3} X}{\log_{2} X}\sqrt{\log X}\right).
\end{align*}
 By taking $\delta \rightarrow 1^-$, we arrive at the desired bound in \eqref{quarticbound}. 
   
\hspace{0.1in}

\noindent{\bf Acknowledgments.} P. G. is supported in part by NSFC grant 12471003 and L. Z. by the FRG Grant PS71536 at the University of New South Wales.

\bibliography{biblio}

\begin{bibdiv}
\begin{biblist}

\bib{Aistleitner16}{article}{
      author={Aistleitner, C.},
       title={Lower bounds for the maximum of the {R}iemann zeta function along
  vertical lines},
        date={2016},
     journal={Math. Ann.},
      volume={365},
      number={1-2},
       pages={473\ndash 496},
}

\bib{B&Y}{article}{
      author={Baier, S.},
      author={Young, M.~P.},
       title={Mean values with cubic characters},
        date={2010},
     journal={J. Number Theory},
      volume={130},
      number={4},
       pages={879\ndash 903},
}

\bib{BEW}{book}{
      author={Berndt, B.~C.},
      author={Evans, R.~J.},
      author={Williams, K.~S.},
       title={{G}auss and {J}acobi sums},
      series={Canadian Mathematical Society Series of Monographs and Advanced
  Texts},
   publisher={John Wiley \& Sons},
     address={New York},
        date={1998},
}

\bib{BS18}{article}{
      author={Bondarenko, A.},
      author={Seip, K.},
       title={Extreme values of the {R}iemann zeta function and its argument},
        date={2018},
     journal={Math. Ann.},
      volume={372},
      number={3-4},
       pages={999\ndash 1015},
}

\bib{DM25}{article}{
      author={Darbar, P.},
      author={Maiti, G.},
       title={Large values of quadratic {D}irichlet {$L$}-functions},
        date={2025},
     journal={Math. Ann.},
      volume={392},
      number={4},
       pages={4573\ndash 4605},
}

\bib{DG22}{article}{
      author={David, C.},
      author={G\"{u}lo\u{g}lu, A.~M.},
       title={One-level density and non-vanishing for cubic {$L$}-functions
  over the {E}isenstein field},
        date={2022},
     journal={Int. Math. Res. Not. IMRN},
      number={23},
       pages={18833\ndash 18873},
}

\bib{BT19}{article}{
      author={de~la Bret\`eche, R.},
      author={Tenenbaum, G.},
       title={Sommes de {G}\'al et applications},
        date={2019},
     journal={Proc. Lond. Math. Soc. (3)},
      volume={119},
      number={1},
       pages={104\ndash 134},
}

\bib{Diac}{article}{
      author={Diaconu, A.},
       title={Mean square values of {H}ecke {$L$}-series formed with {$r$}-th
  order characters},
        date={2004},
     journal={Invent. Math.},
      volume={157},
      number={3},
       pages={635\ndash 684},
}

\bib{FHX26}{article}{
      author={Fan, M.},
      author={Hua, S.},
      author={Xie, S.},
       title={Extreme central values of quadratic {D}irichlet {$L$}-functions
  with prime conductors},
        date={2026},
     journal={Q. J. Math.},
      volume={77},
      number={1},
       pages={175\ndash 199},
}

\bib{Gao2026-6}{article}{
      author={Gao, P.},
       title={Large values of quadratic {D}irichlet {$L$}-functions of
  prime-related moduli},
        date={Preprint},
        note={arXiv:2606.15635},
}

\bib{G&Zhao2020-1}{article}{
      author={Gao, P.},
      author={Zhao, L.},
       title={Mean values of cubic and quartic {D}irichlet characters},
        date={2020},
     journal={Funct. Approx. Comment. Math.},
      volume={63},
      number={2},
       pages={227\ndash 245},
}

\bib{G&Zhao7}{article}{
      author={Gao, P.},
      author={Zhao, L.},
       title={Moments of central values of quartic {D}irichlet
  {$L$}-functions},
        date={2021},
     journal={J. Number Theory},
      volume={228},
       pages={342\ndash 358},
}

\bib{G&Zhao22-1}{article}{
      author={Gao, P.},
      author={Zhao, L.},
       title={Bounds for moments of cubic and quartic {D}irichlet
  {$L$}-functions},
        date={2022},
     journal={Indag. Math. (N.S.)},
      volume={33},
      number={6},
       pages={1263\ndash 1296},
}

\bib{G&Zhao24-02}{article}{
      author={Gao, P.},
      author={Zhao, L.},
       title={Ratios conjecture of quartic -functions of prime moduli},
        date={Preprint},
        note={arXiv:2402.17198},
}

\bib{I&R}{book}{
      author={Ireland, K.},
      author={Rosen, M.},
       title={{A} {C}lassical {I}ntroduction to {M}odern {N}umber {T}heory},
     edition={Second edition},
      series={Graduate Texts in Mathematics},
   publisher={Springer-Verlag},
     address={New York},
        date={1990},
      volume={84},
}

\bib{iwakow}{book}{
      author={Iwaniec, H.},
      author={Kowalski, E.},
       title={{A}nalytic {N}umber {T}heory},
      series={American Mathematical Society Colloquium Publications},
   publisher={American Mathematical Society},
     address={Providence},
        date={2004},
      volume={53},
}

\bib{Lemmermeyer}{book}{
      author={Lemmermeyer, F.},
       title={{R}eciprocity laws. {F}rom {E}uler to {E}isenstein},
   publisher={Springer-Verlag, Berlin},
        date={2000},
}

\bib{MVa1}{book}{
      author={Montgomery, H.~L.},
      author={Vaughan, R.~C.},
       title={{M}ultiplicative number theory. {I}. {C}lassical theory},
      series={Cambridge Studies in Advanced Mathematics},
   publisher={Cambridge University Press},
     address={Cambridge},
        date={2007},
      volume={97},
}

\bib{P}{article}{
      author={Patterson, S.~J.},
       title={The distribution of general {G}auss sums and similar arithmetic
  functions at prime arguments},
        date={1987},
     journal={Proc. London Math. Soc. (3)},
      volume={54},
       pages={193\ndash 215},
}

\bib{Radziwill&Sound}{article}{
      author={Radziwi{\l \l}, M.},
      author={Soundararajan, K.},
       title={Moments and distribution of central {$L$}-values of quadratic
  twists of elliptic curves},
        date={2015},
     journal={Invent. Math.},
      volume={202},
      number={3},
       pages={1029\ndash 1068},
}

\bib{sound1}{article}{
      author={Soundararajan, K.},
       title={Nonvanishing of quadratic {D}irichlet {$L$}-functions at
  $s=\frac{1}{2}$},
        date={2000},
     journal={Ann. of Math. (2)},
      volume={152},
      number={2},
       pages={447\ndash 488},
}

\bib{Sound08}{article}{
      author={Soundararajan, K.},
       title={Extreme values of zeta and {$L$}-functions},
        date={2008},
     journal={Math. Ann.},
      volume={342},
      number={2},
       pages={467\ndash 486},
}

\end{biblist}
\end{bibdiv}
\bibliographystyle{amsxport}

\vspace*{.5cm}

\end{document}